\theoremstyle{plain}
\newtheorem{thm}{Theorem}[section]
\newtheorem{cor}[thm]{Corollary}
\newtheorem{prop}[thm]{Proposition}
\newtheorem{lem}[thm]{Lemma}
\newtheorem{conj}[thm]{Conjecture}
\newenvironment{customThm}[1]{\paragraph*{\textbf{Theorem #1.}}\itshape}{\par}
\newenvironment{customThm1}[2]{\paragraph*{\textbf{Theorem #1} (#2)\textbf{.}}\itshape}{\par}
\theoremstyle{definition}
\newtheorem{defn}[thm]{Definition}
\newtheorem{example}[thm]{Example}
\theoremstyle{remark}
\newtheorem{rmk}[thm]{Remark}
\newcommand{\bbC}{\mathbb{C}} 
\newcommand{\bbN}{\mathbb{N}} 
\newcommand{\bbR}{\mathbb{R}} 
\newcommand{\bbZ}{\mathbb{Z}} 
\newcommand*{\defeq}{\mathrel{\vcenter{\baselineskip0.5ex \lineskiplimit0pt
			\hbox{\scriptsize.}\hbox{\scriptsize.}}}%
	=}
\let\c@equation\c@thm
\numberwithin{equation}{section}
\title{Basmajian-type identities and Hausdorff dimension of limit sets}
\author{Yan Mary He}
\address{Department of Mathematics\\
  University of Chicago\\
  Chicago, IL 60637}
\email{he@math.uchicago.edu}
\date{\today}
\begin{document}

\begin{abstract} 
In this paper, we study Basmajian-type series identities on holomorphic families of Cantor sets associated to one-dimensional complex dynamical systems. We show that the series is absolutely summable if and only if the Hausdorff dimension of the Cantor set is strictly less than one. Throughout the domain of convergence, these identities can be analytically continued and they exhibit nontrivial monodromy.
\end{abstract}

\maketitle

\section{Introduction}
If $\mathcal{C} \subset [0,1]$ is a Cantor set of zero measure, the Hausdorff
dimension of $\mathcal{C}$ is the limit of $\log{a_n}/\log{n}$, where $a_n$ is the
length of the $n$th biggest component of $[0,1]-\mathcal{C}$. There is no obvious analog of this theorem for an arbitrary Cantor set $\mathcal{C} \subset \bbC$, but
for a {\em family} of Cantor sets $\mathcal{C}_z \subset \bbC$ depending 
holomorphically on a complex parameter $z$, we can sometimes obtain such
a relation.

Note that if the measure of $\mathcal{C} \subset [0,1]$ is zero, then $a_n$ satisfy an identity $$1 = \sum_{n=1}^{\infty} a_n.$$ For $\mathcal{C}_z \subset \bbC$ depending on $z$, we obtain by analytic continuation a 
formal holomorphic {\em family} of identities $$S(z) = \sum_{n=1}^{\infty} a_n(z).$$ Thus it is natural to investigate the conditions under which the right hand side is absolutely summable. 

In this paper, we study holomorphic families of Cantor sets $\mathcal{C}_z$ associated 
to familiar $1$-dimensional complex dynamical systems, and for such families we introduce identities of this form (which have a natural geometric interpretation when $\mathcal{C}_z \subset \bbR$) and show that the right hand sides are absolutely summable if and only if the Hausdorff dimension of $\mathcal{C}_z$ is strictly less than $1$. 

The identities themselves are of independent interest --- even in the case
of $\mathcal{C} \subset \bbR$, where a special case is Basmajian's orthospectrum
identity for a hyperbolic surface.

\subsection{Complexified Basmajian's identity}
If $\Sigma$ is a compact hyperbolic surface with geodesic boundary, an
{\em orthogeodesic} $\gamma \subset \Sigma$ is a properly immersed geodesic
arc perpendicular to $\partial \Sigma$. Basmajian (\cite{Bas})
proved the following identity:
\begin{equation}\label{bas_torus}
	\text{length}(\partial \Sigma) = \sum_{\gamma} 2 \log \coth \left(\dfrac{\text{length}(\gamma)}{2}\right)
\end{equation}
where the sum is taken over all orthogeodesics $\gamma$ in $\Sigma$.
The geometric meaning of this identity is that there is a canonical decomposition of 
$\partial \Sigma$ into a Cantor set (of zero measure), plus a countable
collection of complementary intervals, one for each orthogeodesic, whose
length depends only on the length of the corresponding orthogeodesic.

For simplicity, one can look at surfaces with a single boundary component.
A hyperbolic structure on $\Sigma$ is the same as a discrete faithful
representation $\rho:\pi_1(\Sigma) \to \text{PSL}(2,\bbR)$ acting on the upper
half-plane model in the usual way. After conjugation, we may
assume that $\rho(\partial \Sigma)$ stabilizes the positive imaginary axis 
$\ell$. 

Orthogeodesics correspond to double cosets of $\pi_1(\partial \Sigma)$ in
$\pi_1(\Sigma)$. For each nontrivial double coset $\pi_1(\partial \Sigma) \alpha \pi_1(\partial \Sigma)$, the hyperbolic geodesic $\alpha(\ell)$ corresponds to another boundary component of $\widetilde{\Sigma}$,
and the contribution to Basmajian's identity from this term is
$\log(\rho(\alpha)(0)/\rho(\alpha)(\infty))$; hence
$$\text{length}({\partial \Sigma}) = \sum_\alpha \log(\rho(\alpha)(0)/\rho(\alpha)(\infty)).$$

If we deform $\rho$ to some nearby representation $\rho_z:\pi_1(\Sigma) \to \text{PSL}(2,\bbC)$,
and replace each $\rho$ by $\rho_z$ above, we obtain a formula for the
{\em complex length} of $\rho_z(\partial \Sigma)$. This is the desired
complexification of Basmajian's identity.

The identity makes sense, and is absolutely convergent, exactly throughout
the subset $\mathcal{S}_{<1}$ of Schottky space $\mathcal{S}$ where the Hausdorff dimension of
the {\em limit set} of $\rho_z(\pi_1(\Sigma))$ is less than one. Since $\log$ is
multivalued, it is important to choose the correct branch at a real 
representation, and then follow the branch by analytic continuation. The space $\mathcal{S}_{<1}$ is not simply-connected and some terms exhibit monodromy (in the form of integer multiples of $2\pi i$) when they are analytically continued around homotopically nontrivial loops.

\subsection{Quadratic polynomials}
If the complex parameter $c$ is less than $-2$, the Julia set $J_c$ of the quadratic polynomial $f_c(z) = z^2+c$ is a Cantor set of zero measure contained in the real line. Thus there is a natural Basmajian-type identity associated to $J_c$. If we perturb $c$ off the real line in the complement of the Mandelbrot set $\mathcal{M}$, we obtain a formal family
of complexified identities, depending holomorphically on $c$. Analogous to the case of Basmajian's identity, the complexified identity holds exactly on the subset $\left(\bbC \setminus \mathcal{M}\right)_{<1}$ of $\bbC \setminus \mathcal{M}$ where the Hausdorff dimension of the Julia set is strictly less than one. 

$\pi_1(\bbC \setminus \mathcal{M}) = \bbZ$ and the generator induces new monodromy terms in the series identity. Our criterion for convergence gives us a method to numerically compute the locus in $\bbC \setminus \mathcal{M}$ where the Hausdorff dimension of $J_c$ is equal to $1$. Experimentally this locus appears to be a topological circle with a ``cusp'' at $-2$.

\subsection{Thermodynamic formalism}
Our main analytic result --- the relation between the growth rate of the terms in the right hand side of the identities and the Hausdorff dimension --- is proved by using the Thermodynamic Formalism developed by Ruelle, Bowen and others in the 1970's.

The main geometric idea is that the spectrum of the transfer operator is controlled by the geometric contraction rate of sets in a suitable Markov partition, and this in turn can be related to the size of the terms in the identity, precisely because the dynamical systems are conformal.

\subsection{Relations to $L$-functions}
Dirichlet's unit theorem expresses certain covolumes (of units in an algebraic number field) in terms of special values of $L$-functions, which have a series decompostion, of which the Riemann zeta function is the simplest example. Basmajian's identity expresses in a similar way a (co)volume as a series, expressed over topological terms. We suggest that the study of our families of Basmajian-type identities is analogous to the idea of studying $L$-functions expressed in series form.

\subsection{Statement of results}
In section 2, we consider an elementary example of our general theory, namely the case of an iterated function system generated by a pair of planar similarities.

In section 3, we study complexified Basmajian's identity for Schottky groups and exhibit loops in $\mathcal{S}_{<1}$ with nontrivial monodromy. The main theorem of this section is the following:
\vspace{0.3cm}
\begin{customThm1}{3.10}{Complexified Basmajian's identity}
Suppose $\rho_0: F_n \to \text{PSL}(2, \bbC)$ is a Fuchsian marking corresponding to a hyperbolic surface $\Sigma$ with geodesic boundaries $a_1,\cdots, a_k$. Let $\alpha_1,\cdots, \alpha_k \in \pi_1M$ represent the free homotopy classes of $a_1,\cdots, a_k$. If $\rho$ is in the same path component as $\rho_0$ in $\mathcal{S}_{<1}$, then
\begin{equation}\label{Intro_cx_Bas_id}
\sum_{j=1}^k l(\rho(\alpha_j)) = \sum_{p,q =1}^k \sum_{w \in \mathcal{L}_{p,q}} \log [\alpha_p^+,\alpha_p^-;w\cdot \alpha_q^+,w \cdot \alpha_q^-] \text{~~mod~~} 2\pi i,
\end{equation}
where $\alpha_j^+,\alpha_j^-$ are the attracting and repelling fixed points of $\rho(\alpha_j)$, respectively. Moreover, the series converges absolutely.
\end{customThm1}
\vspace{0.3cm}
Here $\mathcal{L}_{p,q}$ is a set of double coset representatives associated to boundary components $a_p$ and $a_q$. The key ingredient in the proof of the theorem is the following analytic result, whose proof is deferred until section 6.
\vspace{0.3cm}
\begin{customThm}{3.8}
	Given a marked Schottky representation $\rho:F_n \to \text{PSL}(2,\bbC)$, the infinite series (\ref*{Intro_cx_Bas_id}) converges absolutely if and only if the Hausdorff dimension of the limit set $\Lambda_{\Gamma}$ of the Schottky group $\Gamma=\rho(F_n)$ is strictly less than one.
\end{customThm}
\vspace{0.3cm}

In section 4 we study Basmajian-type identities for quadratic polynomials and numerically plot the Hausdorff dimension one locus in $\bbC \setminus \mathcal{M}$. Our main results are the following:
\vspace{0.3cm}
\begin{customThm}{4.4}
	For complex parameter $c \in \left(\bbC \setminus \mathcal{M}\right)_{<1}$, let $T_1$ and $T_2$ be the two branches of $f_c^{-1}$ and $z_1$ be the fixed point of $T_1$, then the following identity holds
	\begin{equation} \label{Intro_Julia_id}
		z_1 - (-z_1) = \sum_{w \in \{T_1,T_2\}^*} (-1)^\eta \Big( w(T_1(-z_1)) - w(T_2(-z_1)) \Big),
	\end{equation}
	where $\eta$ is the number of $T_2$'s in the word $w$.
\end{customThm}
\vspace{0.3cm}

Again, we will state the convergence theorem and defer its proof until section 6.
\vspace{0.3cm}
\begin{customThm}{4.3}
	The series in \ref{Intro_Julia_id} is absolutely convergent if and only if the Hausdorff dimension of $J_c$ is strictly less than one.
\end{customThm}
\vspace{0.3cm}

The monodromy action of a nontrival loop in $\left(\bbC \setminus \mathcal{M}\right)_{<1}$ on $J_c$ is a map $\phi$, which, under the symbolic coding, simply exchanges the labels $1$ and $2$ and hence induces a new identity on $J_c$:
\begin{equation} 
z_2 - (-z_2) = \sum_{w \in \{T_1,T_2\}^*} (-1)^{\eta'} \Big( w(T_2(-z_2)) - w(T_1(-z_2)) \Big)
\end{equation}
where $\eta'$ is the number of $T_1$'s in the word $w$.

In section 5 we review elements of the Thermodynamic Formalism and introduce the main technical tools we will need.

Finally, in section 6 we prove Theorems \ref*{thm_cxBas_conv} and \ref*{thm_Julia_conv}.

\subsection{Acknowledgements}
I would like to thank Danny Calegari for posing the problems studied in this paper and for many helpful conversations and insightful suggestions. I thank Peter Shalen for many helpful conversations and a careful reading. I am also grateful to Sarah Koch, Alden Walker and Quoc Ho for useful conversations regarding complex dynamics and programming.

\section{Some linear examples}\label{sec_linear}
In this section, we give the simplest example to illustrate our main theorem, that of a conformal iterated function system in $\bbC$ generated by two similarities.

If $T_1,\cdots, T_k : \bbR^n \to \bbR^n$ is a finite family of contractions, the {\it limit set} is the set of accumulation points of the semigroup generated by $\{T_i\}$. These semigroups are usually called iterated function systems or IFS for short.

Dimensions of limit sets of IFSs have been widely studied. Among several notions of dimension, we will concentrate on box dimension and Hausdorff dimension. Conveniently, for the examples we study these two notions of dimension agree. We denote Hausdorff dimension by $\text{dim}_\text{H}$ and box dimension by $\text{dim}_\text{B}$. For a definition, see \cite{Fal}.

For any compact set $E \subset \bbR ^n$, there is an inequality $\text{dim}_\text{H} E \le \text{dim}_\text{B} E$. However, for limit sets of conformal IFSs, there is an important dynamical condition under which the Hausdorff dimension equals the box dimension.

\begin{defn} [Open set condition]
The family of maps $T_1,\cdots,T_k: \bbR^n \to \bbR^n$ satisfy the {\it open set condition} if there exists an open set $V \subset \bbR^n$ such that $T_i(V)$ are contained in $V$ for all $i=1,\cdots,k$ and $T_i(V) \cap T_j(V) = \emptyset$ for all $i \neq j$.
\end{defn}

\begin{prop}[\cite{Pol}, Proposition 2.1.4]
For a conformal IFS satisfying the open set condition with limit set $\Lambda$, $\text{dim}_\text{H} \Lambda = \text{dim}_\text{B} \Lambda$. 
\end{prop}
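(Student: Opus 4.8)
The plan is to prove the two inequalities separately and show that both dimensions coincide with a single dynamically defined number $s_0$. Since the inequality $\text{dim}_\text{H} \Lambda \le \text{dim}_\text{B} \Lambda$ already holds for any compact set, as noted above, it suffices to produce a value $s_0$ with $s_0 \le \text{dim}_\text{H}\Lambda$ and $\text{dim}_\text{B}\Lambda \le s_0$; sandwiching then forces equality. First I would set up the symbolic machinery: for a word $w = i_1 \cdots i_n$ in the alphabet $\{1,\dots,k\}$ write $T_w = T_{i_1} \circ \cdots \circ T_{i_n}$, so that $\Lambda = \bigcap_n \bigcup_{|w|=n} T_w(\overline{V})$. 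The conformality of the $T_i$ supplies the \emph{bounded distortion property}: there is a constant $K \ge 1$, depending only on the family, with $\|T_w'(x)\|/\|T_w'(y)\| \le K$ for all words $w$ and all $x,y$ near $\Lambda$. Hence $\text{diam}\, T_w(V)$ is comparable, up to the factor $K$, to the derivative norm $\|T_w'\|_V := \sup_{x \in V}\|T_w'(x)\|$, which is the technical substitute for ``contraction ratio'' in the conformal setting.

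Next I would introduce the pressure function $P(s) = \lim_{n\to\infty} \frac{1}{n}\log \sum_{|w|=n} \|T_w'\|_V^{\,s}$, which is finite, continuous, and strictly decreasing in $s$; I take $s_0$ to be its unique zero, the Bowen parameter. For the upper bound $\text{dim}_\text{B}\Lambda \le s_0$ I would use the natural covers of $\Lambda$ by cylinder sets of a common scale: given $r>0$, collect all words $w$ for which $T_w(V)$ first drops below diameter $r$, i.e. $\|T_w'\|_V \le r < \|T_{w'}'\|_V$ where $w'$ deletes the last letter of $w$. These sets cover $\Lambda$ and have diameters comparable to $r$, and by the definition of $s_0$ through $P$ their $s$-dimensional mass is uniformly bounded for every $s > s_0$. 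Counting them yields a box-counting estimate $N(r) \lesssim r^{-s_0-\epsilon}$ for each $\epsilon>0$, whence $\text{dim}_\text{B}\Lambda \le s_0$.

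The main obstacle is the lower bound $\text{dim}_\text{H}\Lambda \ge s_0$, and this is precisely where the open set condition is indispensable. I would construct the $s_0$-conformal (Gibbs) measure $\mu$ supported on $\Lambda$, normalized so that $\mu(T_w(V)) \asymp \|T_w'\|_V^{\,s_0}$, and then verify a Frostman-type bound $\mu(B(x,r)) \lesssim r^{s_0}$; the mass distribution principle then gives $\text{dim}_\text{H}\Lambda \ge s_0$. The hard part is the Frostman estimate. Here the open set condition $T_i(V)\cap T_j(V) = \emptyset$, together with the nesting $T_i(V)\subset V$, guarantees that the cylinders $T_w(V)$ of comparable diameter $\approx r$ have uniformly bounded overlap, so that any ball $B(x,r)$ meets only boundedly many of them; combining this finiteness with the conformal scaling of $\mu$ produces the desired upper estimate on $\mu(B(x,r))$. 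Establishing this bounded-overlap lemma carefully — controlling how many essentially disjoint images $T_w(V)$ of roughly equal size can accumulate near a single point, using only the separation and contraction built into the open set condition — is the technical heart of the argument.

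Once the separation lemma is in hand, the three estimates assemble into the chain $s_0 \le \text{dim}_\text{H}\Lambda \le \text{dim}_\text{B}\Lambda \le s_0$, forcing equality throughout and identifying the common dimension with the zero of the pressure. I expect the distortion and pressure bookkeeping to be routine for a conformal family, with all the genuine difficulty concentrated in translating the combinatorial disjointness of the open set condition into the geometric bounded-multiplicity statement that drives the Frostman bound.
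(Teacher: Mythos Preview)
The paper does not supply its own proof of this proposition; it merely quotes the result from Pollicott's lecture notes \cite{Pol}, so there is no in-paper argument to compare against. Your outline is the standard thermodynamic-formalism proof (bounded distortion, Bowen pressure parameter $s_0$, cylinder covers for the upper box bound, Gibbs/conformal measure plus a bounded-overlap lemma and the mass distribution principle for the Hausdorff lower bound) and is correct as a strategy; it is essentially what one finds in the cited reference.
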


\subsection{Middle-third Cantor set} 
The middle-third Cantor set $\mathcal{C}$ can be viewed as a ``cut-out set'' -- a set obtained from the interval $I=[0,1]$ by cutting out a sequence of disjoint intervals $(\frac{1}{3}, \frac{2}{3}), (\frac{1}{9}, \frac{2}{9}), (\frac{7}{9}, \frac{8}{9})\cdots$
Let $a_i$ denote the length of the $i$th interval, then $$a_i = (1/3)^{(\left \lfloor{\log_2i}\right \rfloor + 1)}, \text{ where} \left \lfloor{x}\right \rfloor \text{is the floor function.}$$ 
By construction, it is obvious that the length of the total interval $[0,1]$ equals the infinite sum of lengths of complimentary intervals; namely, we have the following simplest Basmajian-type identity:
\begin{equation} \label{Cantor_id}
\left\vert{[0,1]}\right\vert = 1 = \sum_{i=1}^{\infty}a_i = \frac{1}{3} + \left(\frac{1}{9}+\frac{1}{9}\right) + \left(\frac{1}{27}+\frac{1}{27}+\frac{1}{27}+\frac{1}{27}\right) + \cdots 
\end{equation}

On the other hand, $\mathcal{C}$ is also the limit set of the conformal IFS $\{T_1, T_2: \bbR \to \bbR\}$ given by $$T_1(x) = \frac{x}{3} \text{ and } T_2(x) = \frac{x}{3} + \frac{2}{3}.$$ 
Note that $0$ is the fixed point of $T_1$, $1$ is the fixed point of $T_2$ and all other points in $\mathcal{C}$ are the images of $0$ and $1$ under iterations of $T_1$ and $T_2$.
In terms of $T_1$, $T_2$ and their fixed points, identity \ref{Cantor_id} becomes
\begin{equation} \label{dyn_Cantor_id}
\lvert z_1 - z_2\rvert = \sum_{w \in \{T_1,T_2\}^{*}} \lvert w(T_2(z_1)) - w(T_1(z_2))\rvert
\end{equation}
where $z_1$ and $z_2$ are the fixed points of $T_1$ and $T_2$ respectively and the summation is taken over all the words $w$ in the {\it alphabet} $\{T_1, T_2\}$.

The following theorem due to Falconer shows that the box dimension of a real one-dimensional cut-out set (and in particular $\text{dim}_{\text{H}}\mathcal{C}$) is controlled by the asymptotic sizes of complementary intervals.

\begin{thm} [Falconer \cite{Fal}] \label{prop_Falc}
Let $A$ be a compact subset of $\bbR$ and let $\{A_i\}_{i = 1}^{\infty}$ be a sequence of disjoint open subintervals of $A$ such that if $a_i = |A_i|$ with $a_1 \ge a_2 \ge a_3 \ge \cdots$, then $|A| = \sum_{i = 1}^{\infty} a_i$. Write $E = A \setminus (\bigcup_{i=1}^{\infty} A_i)$. Let $$a = -\liminf \dfrac{\log a_n}{\log n} \text{ and } b = -\limsup \dfrac{\log a_n}{\log n}.$$ Then $$\frac{1}{a} \le \underline{\text{dim}}_\text{B} E \le \overline{\text{dim}}_\text{B} E \le \frac{1}{b}.$$
\end{thm}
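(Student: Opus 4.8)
The plan is to work directly with the covering-number definition of box dimension: if $N(\epsilon)$ denotes the least number of intervals of length $\epsilon$ needed to cover $E$, then $\overline{\text{dim}}_\text{B} E = \limsup_{\epsilon \to 0} \log N(\epsilon)/(-\log \epsilon)$ and $\underline{\text{dim}}_\text{B} E$ is the corresponding $\liminf$. The whole argument hinges on the \emph{counting function} $m(\epsilon) = \#\{i : a_i \ge \epsilon\}$, which, since the $a_i$ are decreasing, is essentially the inverse of $n \mapsto a_n$. First I would record that $E$ has zero length: because $|A| = \sum_i a_i$ and the $A_i$ are disjoint subintervals of $A$, the removed intervals exhaust the measure of $A$, so $|E| = 0$. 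This is what lets me later ignore $E$ in any length accounting.

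Next I would prove a two-sided comparison between $N(\epsilon)$, $m(\epsilon)$, and the tail sum $\sum_{a_i < \epsilon} a_i$. For the upper bound, the $m(\epsilon)$ gaps of length $\ge \epsilon$ cut the convex hull of $A$ into at most $m(\epsilon)+1$ blocks whose total length is $|E| + \sum_{a_i < \epsilon} a_i = \sum_{a_i < \epsilon} a_i$; covering each block of length $\ell$ by at most $\ell/\epsilon + 1$ intervals of length $\epsilon$ gives $N(\epsilon) \le \epsilon^{-1}\sum_{a_i<\epsilon} a_i + m(\epsilon) + 1$. For the lower bound, two points of $E$ lying on opposite sides of a gap of length $\ge 2\epsilon$ are at distance $\ge 2\epsilon > \epsilon$, so no single $\epsilon$-interval can meet $E$ in two distinct blocks cut out by the gaps of length $\ge 2\epsilon$; since each such interior block contains gap endpoints belonging to $E$, this forces $N(\epsilon) \ge m(2\epsilon) - 1$.

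I would then convert the hypotheses on $a$ and $b$ into power bounds on the sequence and thence on $m$. From $\liminf \log a_n/\log n = -a$ one gets $a_n \ge n^{-a-\delta}$ for all large $n$, hence $m(\epsilon) \ge c\,\epsilon^{-1/(a+\delta)}$ for small $\epsilon$; combined with $N(\epsilon) \ge m(2\epsilon) - 1$ this yields $\underline{\text{dim}}_\text{B} E \ge 1/(a+\delta)$, and letting $\delta \to 0$ gives $\underline{\text{dim}}_\text{B} E \ge 1/a$. Dually, from $\limsup \log a_n/\log n = -b$ one gets $a_n \le n^{-b+\delta}$ eventually, hence $m(\epsilon) \le C\,\epsilon^{-1/(b-\delta)}$. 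This Abelian-type passage between the decay exponent of $(a_n)$ and the growth exponent of its counting function is exactly where the reciprocals $1/a$ and $1/b$ originate, and the bookkeeping here is the first place to be careful.

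The main obstacle is the last step, namely controlling the tail term $\epsilon^{-1}\sum_{a_i<\epsilon} a_i$ in the upper bound, since a priori it could dominate $m(\epsilon)$. The clean way is a dyadic decomposition: writing $\sum_{a_i<\epsilon} a_i \le \sum_{k\ge 0}(\epsilon 2^{-k})\, m(\epsilon 2^{-k-1})$ and inserting $m(\eta) \le C\eta^{-1/(b-\delta)}$ turns the tail into a geometric series with ratio $2^{-(1-1/(b-\delta))}$, which converges precisely when $b - \delta > 1$ and then sums to $O(\epsilon^{-1/(b-\delta)})$, the same order as $m(\epsilon)$. Thus when $b > 1$ we obtain $N(\epsilon) = O(\epsilon^{-1/(b-\delta)})$ and so $\overline{\text{dim}}_\text{B} E \le 1/(b-\delta) \to 1/b$; when $b \le 1$ the claimed bound $1/b \ge 1$ is automatic, since $E \subset \bbR$ forces $\overline{\text{dim}}_\text{B} E \le 1$. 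The delicate point is precisely this dichotomy at $b = 1$: the dyadic sum only converges in the regime $b>1$, and one must separately invoke the trivial bound below it to see the estimate still holds.
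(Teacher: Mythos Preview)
The paper does not give a proof of this theorem; it is quoted from Falconer's book and stated without argument. Your proposal is essentially the standard proof found there: compare the covering number $N(\epsilon)$ with the gap-counting function $m(\epsilon)=\#\{i:a_i\ge\epsilon\}$ and the tail $\epsilon^{-1}\sum_{a_i<\epsilon}a_i$, then translate the decay exponents $a,b$ of $(a_n)$ into growth exponents $1/a,1/b$ for $m(\epsilon)$ by inverting $n\mapsto a_n$. The dyadic summation you use to show the tail term is $O(\epsilon^{-1/(b-\delta)})$ when $b>1$, together with the trivial bound $\overline{\dim}_{\mathrm B}E\le 1$ when $b\le 1$, is exactly the right dichotomy, and the lower bound via $N(\epsilon)\ge m(2\epsilon)-1$ is fine since gap endpoints lie in $E$. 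The argument is correct as written.
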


\begin{example}
For the middle-third Cantor set $\mathcal{C}$, $\text{dim}_\text{H}\mathcal{C} =\text{dim}_\text{B}\mathcal{C} = \log_23 = \displaystyle\lim_{n\to \infty} \dfrac{\log a_n}{\log n}$.
\end{example}

\subsection{Semigroups of similarities} 
Consider the following conformal IFS of two complex similarities $\{f_c, g_c : \bbC \to \bbC\}$ given by $$f_c(z) = cz \text{~~and~~} g_c(z) = c(z-1)+1$$ where $0<|c|<1$ is a complex parameter. The limit set $\Lambda_c$ is either connected or a Cantor set (\cite{C-K-W}, Lemma 5.2.1). 

The middle-third Cantor set corresponds to the case $c=1/3$. Hence, when $\Lambda_c$ is a Cantor set, formula \ref{dyn_Cantor_id} gives a formal Basmajian-type identity
\begin{equation} \label{geo_id}
1 = \sum_{n=0}^{\infty} (2c)^n(1-2c)
\end{equation}
which is just a geometric series. It is (absolutely) convergent if and only if $|c| < 1/2$. 

On the other hand, this conformal IFS satisfies the open set condition, and therefore by Moran's theorem, the Hausdorff dimension of $\Lambda_c$ is given by  
$$\text{dim}_{\text{H}}\Lambda_c = -\log 2/ \log |c|,$$ and $\text{dim}_{\text{H}}\Lambda_c<1$ if and only if $\lvert c \rvert < 1/2$. 

Hence, we see that the series in \ref{geo_id} is absolutely convergent if and only if
$\text{dim}_{\text{H}}\Lambda_c<1$.

\section{Schottky Groups: the Complexified Basmajian Identity} \label{sec_cxBas}
Our main goal for this section is to extend Basmajian's identity \ref{bas_torus} to a suitable subspace of marked Schottky space $\mathcal{S}$ via analytic continuation. There is a ``formal'' identity for any marked Schottky group $\Gamma$. However, the interpretation is problematic unless one deals with the issue of convergence. As in the example of section 2, it turns out that the series is absolutely summable if and only if the limit set $\Lambda_{\Gamma}$ has Hausdorff dimension strictly less than one. Denote by $\mathcal{S}_{<1}$ the space of Schottky groups whose limit set has Hausdorff dimension strictly less than one. Then the main theorem of this section (Theorem \ref{thm_cx_Bas}) states that this is the maximal domain on which the extended Basmajian's identity holds.

On the other hand, the extended Basmajian's identity may serve as a tool to study the topology of $\mathcal{S}_{<1}$. More specifically, when analytically continued along a loop in $\mathcal{S}_{<1}$, finitely many terms in the series will exhibit monodromy --- their imaginary part changes by integer multiples of $2\pi$. We will present examples of such loops.

\subsection{Marked Schottky space}
Let $F_n = \langle g_1,\cdots, g_n \rangle$ be a free group on $n$ generators. A discrete faithful representation $\rho : F_n \to \text{PSL}(2,\bbC)$ is called a {\it marked Schottky representation} if there is a subsurface $E \subset \hat{\bbC}$ (homeomorphic to a sphere with $2n$ holes) with boundary components $C_i$, $C_i'$ for $i=1,\cdots,n$ so that $\rho(g_i)(C_i) = C_i'$ and $\rho(g_i)(E) \cap E = C_i'$. The representation is called a {\it marked Fuchsian representation} or a {\it Fuchsian marking} if it is conjugate to a representation into $\text{PSL}(2,\bbR)$. Two marked Schottky representations $\rho_1$ and $\rho_2$ are {\it equivalent} if they are conjugate. The space of equivalence classes of marked Schottky representations is called the {\it marked Schottky space}, denoted by $\mathcal{S}$. A standard reference is \cite{Bers}.

The image of $F_n$ under a Schottky representation is a {\it Schottky group} $\Gamma$. It is well-known that every nontrivial element in $\Gamma$ is loxodromic and the limit set $\Lambda_{\Gamma}$ is a Cantor set.

$\mathcal{S}$ can be parametrized by the fixed points and the trace squares of the $\rho(g_i)$'s. Denote Fix$^-$ and Fix$^+$ the repelling and attracting fixed points, respectively. Normalize the representation so that $\text{Fix}^-\rho(g_1)=0$, $\text{Fix}^+\rho(g_1)= \infty$ and $\text{Fix}^-\rho(g_2)=1$. Then $\rho$ is uniquely determined by
$$\left(\text{Fix}^+\rho(g_2), \text{Fix}^-\rho(g_3), \text{Fix}^+\rho(g_3),\cdots, \text{Fix}^+\rho(g_n); \text{tr}^2(\rho(g_1)),\cdots, \text{tr}^2(\rho(g_n)) \right)$$ which gives a parametrization of $\mathcal{S}$ as a subspace of $\hat{\bbC}^{2n-3} \times \bbC^{2n}$. It is a standard fact that $\mathcal{S}$ is open and connected. Moreover,
$\mathcal{S}_{<1}$ is also open because Hausdorff dimension is an analytic function on the deformation space of Schottky groups (see Corollary \ref{cor_ana_hdim}).

In the next three subsections, we reformulate both sides of Basmajian's identity in terms of representations.

\subsection{Complex length}
For $A \in \text{PSL}(2,\bbC)$, the {\it complex length} of $A$ is 
$$l(A) = \cosh^{-1}\left(\dfrac{\text{tr~}A^2}{2}\right).$$ 
Note that complex length is only defined up to multiples of $2\pi i$.

Thus, for any $g \in F_n$, the complex length $l(\rho(g))$ is an analytic function of the coordinates on $\mathcal{S}$. The next lemma states that the real part of complex length stays positive when we deform a Fuchsian marking.

\begin{lem}
	Let $\rho_0$ be a Fuchsian marking with $l(\rho(g))$ real positive for all $g \in F_n$. Then Re$(l(\rho_t(g)))>0$ for all $g \in F_n$ when analytically continued along a path $\rho_t$ in $\mathcal{S}_{<1}$.
\end{lem}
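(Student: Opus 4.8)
The plan is to reduce the statement to an elementary connectedness argument, once we understand how the sign of $\text{Re}(l)$ is constrained on the loxodromic locus. First I would record the basic dictionary for the paper's convention $l(A)=\cosh^{-1}(\text{tr}(A^2)/2)$: if $A$ is loxodromic with multiplier $\lambda$ (the eigenvalue of a lift with $|\lambda|>1$), then $\cosh(l)=\text{tr}(A^2)/2=(\lambda^2+\lambda^{-2})/2=\cosh(2\log\lambda)$, so every value of $l$ is $\pm 2\log\lambda$ modulo $2\pi i$, whence $\text{Re}(l)=\pm 2\log|\lambda|$. The key observation is then that $\text{Re}(l(A))\neq 0$ for every branch of $l$ precisely when $A$ is loxodromic: indeed $\text{Re}(l)=0$ forces $|\lambda|=1$, i.e. $A$ elliptic or parabolic, equivalently $\text{tr}(A^2)/2=\cosh(l)\in[-1,1]$.

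Next I would fix a nontrivial $g\in F_n$ and study the function $t\mapsto l(\rho_t(g))$ obtained by analytic continuation of $l(\rho_0(g))$. Writing $w(t)=\text{tr}(\rho_t(g)^2)/2$, which is polynomial in the entries of $\rho_t(g)$ and hence an analytic (in particular continuous) function of $t$ along the path, we have $l(\rho_t(g))=\cosh^{-1}(w(t))$ on the continued branch. Because every nontrivial element of a Schottky group is loxodromic and the whole path lies in $\mathcal{S}_{<1}\subseteq\mathcal{S}$, the element $\rho_t(g)$ is loxodromic for every $t$; by the observation above this means $w(t)\notin\{\pm 1\}$ for all $t$, so $w(t)$ avoids the two branch points of $\cosh^{-1}$. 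Hence the analytic continuation is unobstructed and $t\mapsto l(\rho_t(g))$ is a well-defined continuous (indeed analytic) function along the path.

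Finally I would apply the intermediate value theorem to $\phi(t):=\text{Re}(l(\rho_t(g)))$, which is continuous on the parameter interval. By the second paragraph $\rho_t(g)$ stays loxodromic, so $\phi(t)\neq 0$ for every $t$; and by hypothesis $\phi(0)=\text{Re}(l(\rho_0(g)))>0$. A continuous real function on a connected interval that never vanishes and is positive at one point is positive throughout, giving $\phi(t)>0$ for all $t$, as claimed. (The identity $g=e$ is excluded since there $l\equiv 0$; the hypothesis that $l(\rho_0(g))$ is real positive already restricts attention to nontrivial $g$.)

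The only genuine subtlety — and the step I would treat most carefully — is ensuring the branch of $l$ remains globally consistent, i.e. that analytic continuation does not silently swap the multipliers $\lambda\leftrightarrow\lambda^{-1}$, which would flip the sign of $\text{Re}(l)$. This is exactly what the condition $w(t)\notin\{\pm 1\}$ rules out: the two eigenvalues can be interchanged only through a collision at $\lambda=\pm 1$ (a branch point of $\cosh^{-1}$), which never happens on the loxodromic locus, so the continuously tracked multiplier keeps $|\lambda_t|>1$ and $\text{Re}(l)$ stays strictly on one side of $0$. Everything else is soft topology; the loxodromic property of Schottky groups does the real work.
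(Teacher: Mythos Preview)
Your argument is correct and is essentially the paper's proof: the paper simply observes that if $\mathrm{Re}(l(\rho_t(g)))=0$ for some $t$ and some $g$, then $\rho_t(g)$ would fail to be loxodromic, contradicting the Schottky property. You have supplied the surrounding details (the $\cosh^{-1}$ branch analysis, the avoidance of $w(t)=\pm 1$, and the explicit intermediate value argument) that the paper leaves implicit.
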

\begin{proof}
	If there were some $t$ and some $g \in F_n$ such that Re$(l(\rho_t(g)))=0$, then it would contradict the fact that every element in a Schottky group is loxodromic.
\end{proof}

\subsection{Double cosets, finite state automata and orthogeodesics}
In this subsection, we show that orthogeodesics on $\Sigma$ correspond to certain double cosets of $\pi_1 \Sigma$. There is an efficient coset enumeration algorithm which can be implemented for numerical calculations. Looking ahead, this enumeration parallels the encoding of the dynamical systems as a subshift of finite type, a step in the application of the Thermaldynamic Formalism carried out in section 6.

Let $\alpha_1,\cdots, \alpha_k \in \pi_1\Sigma$ represent the free homotopy classes of boundary geodesics $a_1,\cdots, a_k$, respectively. Denote $H_j \defeq \langle \alpha_j \rangle$, the subgroup of $\pi_1\Sigma$ generated by $\alpha_j$. Let $\mathcal{DC}(\Sigma)$ denote the set of double cosets of the form $H_pwH_q$ where $w \in \pi_1\Sigma$ is not in $H_p \cap H_q$, for $p,q =1,\cdots,k$. Denote by $[w]_{p,q}$ the class of the double coset $H_p w H_q$.

\begin{prop}\label{prop_isom}
There is a bijection $$\Phi : \mathcal{DC}(\Sigma) \to \{\text{Orthogeodesics on } \Sigma\}.$$
\end{prop}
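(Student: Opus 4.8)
The plan is to establish the bijection $\Phi$ by setting up the universal cover picture and matching orthogeodesics with double cosets geometrically. Let me sketch the correspondence and identify where the work lies.

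The plan is to realize both sides of the correspondence inside the universal cover $\bbH^2$ of $\Sigma$, identifying $\pi_1\Sigma$ with the Fuchsian covering group $\Gamma \subset \text{PSL}(2,\bbR)$. Write $\ell_j$ for the axis of $\rho_0(\alpha_j)$; since $\Sigma$ has geodesic boundary, the full preimage of $a_1,\dots,a_k$ is the $\Gamma$-orbit of $\{\ell_1,\dots,\ell_k\}$, a collection of pairwise disjoint geodesics, and $\text{Stab}_\Gamma(\ell_j)=H_j$. I would define $\Phi$ by sending a double coset $H_p w H_q$ to the projection to $\Sigma$ of the unique common perpendicular $\delta(w)$ between $\ell_p$ and $w\ell_q$. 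This perpendicular exists and is unique precisely when $\ell_p \neq w\ell_q$; since distinct boundary components are non-conjugate, $\ell_p = w\ell_q$ can occur only when $p=q$ and $w\in H_p$, which is exactly the degenerate case the condition $w\notin H_p\cap H_q$ removes from $\mathcal{DC}(\Sigma)$. As $\delta(w)$ meets $\ell_p$ and $w\ell_q$ orthogonally, its image in $\Sigma$ is an orthogeodesic, so $\Phi$ lands in the right target.

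First I would verify well-definedness. Replacing $w$ by $h_p w h_q$ with $h_p\in H_p$, $h_q\in H_q$ leaves $\ell_q$ fixed by $h_q$, so $w\ell_q$ becomes $h_p w\ell_q$; applying $h_p^{-1}$, which fixes $\ell_p$, carries the common perpendicular between $\ell_p$ and $h_p w\ell_q$ to the one between $\ell_p$ and $w\ell_q$. Since $h_p^{-1}\in\Gamma$, these two perpendiculars have the same image in $\Sigma$, so $\Phi$ depends only on the double coset $[w]_{p,q}$.

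Next I would prove surjectivity and injectivity. Given an oriented orthogeodesic $\gamma$ from $a_p$ to $a_q$, lift it to an arc $\widetilde\gamma$ and, using transitivity of $\Gamma$ on the lifts of $a_p$, arrange that its initial endpoint lies on $\ell_p$. Its terminal endpoint then lies on some lift of $a_q$, necessarily of the form $w\ell_q$, and by uniqueness of the common perpendicular $\widetilde\gamma=\delta(w)$, whence $\gamma=\Phi(H_p w H_q)$. For injectivity, suppose $\Phi(H_p w H_q)=\Phi(H_{p'}w'H_{q'})$. The two orthogeodesics coincide, forcing $p=p'$ and $q=q'$, and their normalized lifts $\delta(w),\delta(w')$ differ by some $g\in\Gamma$. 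Matching the two endpoints gives $g\ell_p=\ell_p$ and $g(w\ell_q)=w'\ell_q$; the first forces $g=h_p\in H_p$, and then $w'^{-1}h_p w\in\text{Stab}_\Gamma(\ell_q)=H_q$, so $H_p w H_q=H_p w' H_q$.

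The hard part is the interface between the two endpoint-matchings in the injectivity and surjectivity arguments, i.e.\ translating ``same image in $\Sigma$'' into ``differ by a single $g\in\Gamma$ respecting endpoints.'' This rests on two geometric inputs I would isolate cleanly: the disjointness of distinct boundary lifts, so that common perpendiculars are unique and genuinely realize the orthogeodesics, and the transitivity of $\Gamma$ on the lifts of each $a_j$ together with $\text{Stab}_\Gamma(\ell_j)=H_j$. A secondary bookkeeping point is orientation: an unoriented orthogeodesic between $a_p$ and $a_q$ corresponds to the pair $\{H_p w H_q,\ H_q w^{-1} H_p\}$, so I would state the bijection for oriented orthogeodesics (equivalently, track the ordered index pair $(p,q)$), which is precisely the indexing appearing on the right-hand side of the identity in Theorem~3.10.
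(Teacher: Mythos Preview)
Your argument is correct and complete, but it takes a different route from the paper's one-line proof. The paper simply invokes the fact that every homotopically nontrivial proper arc has a unique orthogeodesic representative (by curve shortening), implicitly using that $H_p\backslash\pi_1\Sigma/H_q$ parametrizes relative homotopy classes of arcs from $a_p$ to $a_q$. You instead work entirely in the universal cover: identify double cosets with $\Gamma$-orbits of ordered pairs of boundary lifts $(\ell_p, w\ell_q)$, and use the existence and uniqueness of the common perpendicular between disjoint geodesics in $\bbH^2$ in place of curve shortening. Your approach is more explicit and has the virtue of dovetailing directly with the cross-ratio formula~(\ref{coth}) used immediately afterward, since the common perpendicular between $\ell_p$ and $w\ell_q$ is exactly the object whose length is encoded by $[\alpha_p^+,\alpha_p^-;w\alpha_q^+,w\alpha_q^-]$. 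The paper's approach is shorter and metric-independent. You also flag the orientation bookkeeping (that unoriented orthogeodesics correspond to pairs $\{H_pwH_q,\,H_qw^{-1}H_p\}$), a point the paper leaves implicit but which is indeed consistent with the double sum over ordered $(p,q)$ in~(\ref{cx_Bas_id}).
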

\begin{proof}
Every homotopically nontrivial proper arc has a unique orthogeodesic in its homotopy class (rel. endpoints), which can be seen e.g. by curve shortening.
\end{proof}

Let $S$ be a symmetric generating set of $\pi_1\Sigma$. Choose a total order on $S$. This determines a unique reduced lexicographically first (RedLex) representative $w$ of each double coset $H_p w H_q$. Let $\mathscr{L}_{p,q}$ be the set of nontrivial RedLex double coset representatives for fixed $p, q$. Then the set $$\mathscr{L} \defeq \coprod\limits_{1 \le p, q \le k} \mathscr{L}_{p,q}$$ is naturally in bijection with the set of orthogeodesics on $M$.

\begin{defn}
	A {\it finite state automaton} on a fixed alphabet $S$ is a finite directed graph $G$ with a starting vertex $*$ and a subset of the vertices called the {\it accept states}, whose oriented edges are labeled by letters of $S$ so that there is at most one outgoing edge with any given label at each vertex. A word is {\it accepted} by a finite state automaton if there is a path realizing the word which starts with $*$ and ends on an accept state.
\end{defn}

For a fixed finite alphabet $\mathcal{A}$, a {\it language} is a subset of the set of all words on $\mathcal{A}$. A language is {\it regular} if it consists of exactly the words accepted by some finte state automaton.

\begin{prop}
	$\mathscr{L}$ is a {\it regular language} over the {\it alphabet} $S$.
\end{prop}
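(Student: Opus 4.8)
The plan is to exploit the fact that, since $\Sigma$ has nonempty boundary, $\pi_1\Sigma$ is a free group, and to reduce the claim to the standard closure properties of regular languages. I would take $S$ to be a free basis together with its inverses, so that the freely reduced words over $S$ are exactly the geodesic normal forms of group elements, and I interpret RedLex as the shortlex order (first by length, then lexicographically by the chosen total order on $S$), under which each double coset has a unique minimal representative. The set of reduced words is recognized by the automaton that scans left to right and rejects upon reading any cancelling pair $ss^{-1}$, so it is regular; this will be the ambient language inside which I cut out $\mathscr{L}$. Each boundary element $\alpha_j$ is represented by a fixed cyclically reduced word of some length $L_j$, and $H_j=\langle\alpha_j\rangle$ is a maximal cyclic, hence quasiconvex, subgroup of $\pi_1\Sigma$.

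Next I would characterize $\mathscr{L}_{p,q}$ as those reduced words $w$ that are RedLex-minimal in their double coset $H_p w H_q$ and do not lie in $H_p\cap H_q$. The crux is that double-coset minimality is a \emph{local} condition, depending only on a bounded prefix and a bounded suffix of $w$. In a free group, left multiplication by a power $\alpha_p^{\pm m}$ alters $w$ only through cancellation against its initial segment, and because $\alpha_p$ is cyclically reduced of bounded length this cancellation reaches only $O(L_p)$ letters; symmetrically, right multiplication by $\alpha_q^{\pm m}$ affects only the terminal $O(L_q)$ letters. Hence once $|w|>2(L_p+L_q)$ the two operations act on disjoint ends and \emph{decouple}: such a $w$ is RedLex-minimal precisely when its bounded prefix is not left-reducible by $\alpha_p^{\pm1}$ and its bounded suffix is not right-reducible by $\alpha_q^{\pm1}$. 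Each of these is the avoidance of a finite list of prefixes (respectively suffixes), which is a regular condition.

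To finish, I would invoke closure under finite union, intersection, and complement. The finitely many short words, of length at most $2(L_p+L_q)$, contribute only a finite, hence regular, set. The subgroup $H_p\cap H_q$ is itself cyclic (and trivial whenever $\alpha_p,\alpha_q$ are not powers of a common element), so its reduced words form a regular language and its complement within the reduced words is regular. Intersecting, for each fixed pair $(p,q)$, the regularity of reducedness, the prefix/suffix minimality constraints, and the nontriviality constraint yields that $\mathscr{L}_{p,q}$ is regular; taking the disjoint union $\mathscr{L}=\coprod_{p,q}\mathscr{L}_{p,q}$ over the finitely many pairs (tracking the pair via a product automaton) shows $\mathscr{L}$ is regular.

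The step I expect to be the main obstacle is making the locality and decoupling of double-coset minimality rigorous: I must rule out that some large power $\alpha_p^m w\,\alpha_q^n$, with cancellation cascading inward from both ends, yields a strictly smaller reduced representative invisible from a bounded prefix and suffix. This rests on the free-group cancellation estimates --- that a power of a cyclically reduced word cancels only a bounded amount against any reduced word, and that the two boundary subgroups act on opposite ends without interference once $w$ is long --- which is precisely where the quasiconvexity of the cyclic subgroups $H_p,H_q$ and the fellow-traveller property of $\pi_1\Sigma$ enter.
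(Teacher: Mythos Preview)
Your proposal is correct and reaches the same conclusion, but by a different and more hands-on route than the paper. The paper's proof is a four-line appeal to the general machinery of word-hyperbolic groups: since $\pi_1\Sigma$ is hyperbolic and each $H_j$ is quasiconvex, $\delta$-thinness forces all geodesic coset representatives to synchronously fellow-travel, and the standard theorem that (shortlex) coset representatives for quasiconvex subgroups form a regular language does the rest. You instead exploit the concrete fact that $\pi_1\Sigma$ is \emph{free}: reduced words are the geodesic normal forms, and you argue directly via cancellation estimates that double-coset shortlex minimality is governed by a bounded prefix and a bounded suffix, hence is a finite-state condition; then you finish with closure of regular languages under Boolean operations and finite union.

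What each buys: the paper's argument is shorter and situates the result in the broader context that would generalize (e.g., to other hyperbolic surface or $3$-manifold groups with quasiconvex peripheral subgroups), but it is a black box. Your argument is more elementary and constructive --- it essentially produces the automaton, which is exactly what one needs for the explicit enumeration in Example~\ref{ex_torus} --- at the cost of the free-group-specific cancellation bookkeeping you flag at the end. Your identification of the ``decoupling'' step as the crux is right, and it is precisely the free-group shadow of the fellow-traveller/quasiconvexity argument the paper invokes abstractly.
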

\begin{proof}
	Note that $\pi_1\Sigma$ is hyperbolic and $H_j$'s are quasiconvex. Then by $\delta$-thinness of triangles of a hyperbolic group, the language of all shortest coset representatives is regular. Also, for a given coset, all shortest coset representatives synchronously fellow-travel. Thus, $\mathscr{L}$ is regular.
\end{proof}

The finite state automaton parametrization of a regular language gives rise to a fast coset enumeration algorithm, as demonstrated in the following example.

\begin{example}(Torus with a geodesic boundary) \label{ex_torus}
	Let $\Sigma$ be a torus with a single geodesic boundary and let $S = \{a, b, A, B\}$ be a symmetric generating set for $\pi_1\Sigma$, where capital letters denote inverses. Order $S$ by $a>b>A>B$. Then (the free homotopy class of) the boundary geodesic is represented by the commutator $[a,b]$. Let $H \defeq \langle [a,b]\rangle$. By Proposition \ref{prop_isom}, the set of orthogeodesics on $\Sigma$ is parametrized by the set $\mathscr{L}$ of nontrivial RedLex representatives $w$ of $HwH$.
	
	More precisely, $w \in \mathscr{L}$ if and only if it satisfies the following conditions:
	\begin{enumerate}
		\item $w$ is reduced, i.e. $w$ does not contain $aA$, $Aa$, $bB$ or $Bb$;
		\item $w$ does not start with $abA$ or $ba$;
		\item $w$ does not end with $BA$ or $bAB$.
	\end{enumerate} 
	
	Figure \ref{fig:automaton} is a finite state automaton parametrizing the regular language $\mathscr{L}$. In this automaton, double-circled-nodes are accept states and red nodes are reject states. Accepted words are obtained by starting at $*$, going along edges and ending at an accept state and the word is the concatenation of edge labels.
	
	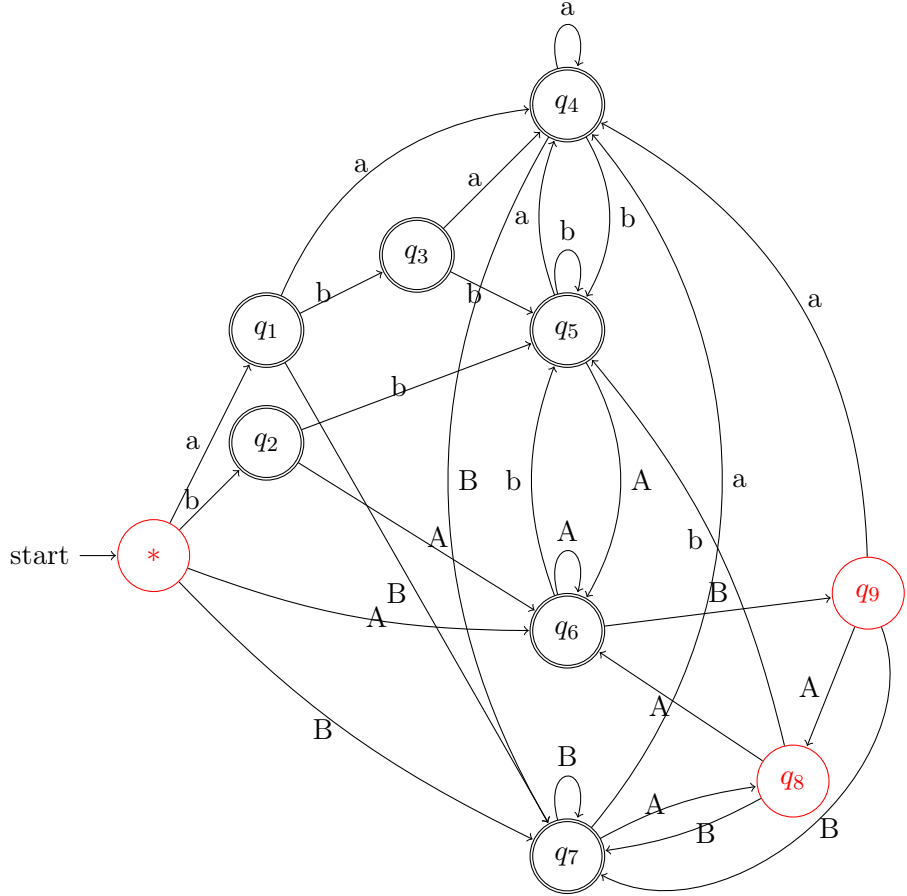
\begin{figure}[h!]
		\centering
		\begin{tikzpicture} [shorten >=0.5pt,node distance=0.7cm,auto] 
		\node[red][state, initial] (q0) at (0.5,0)  {$*$}; 
		\node[state,accepting] (q1) at (2,3) {$q_1$}; 
		\node[state,accepting] (q2) at (2,1.5) {$q_2$};
		\node[state,accepting] (q3) at (4,4) {$q_3$}; 
		\node[state,accepting] (q4) at (6,6) {$q_4$};
		\node[state,accepting] (q5) at (6,3) {$q_5$}; 
		\node[state,accepting] (q6) at (6,-1) {$q_6$};
		\node[state,accepting] (q7) at (6,-4) {$q_7$};  
		\node[red,state] (q8) at (9,-3) {$q_8$};
		\node[red,state] (q9) at (10,-0.5) {$q_9$};  
		\path[->] 
		(q0) edge [left]  node {a} (q1)
		edge [left]  node [swap] {b} (q2)
		edge [bend right=10,right]  node [swap] {A} (q6)
		edge [bend right=10,left]  node [swap] {B} (q7)
		(q1) edge [left]  node  {b} (q3)
		edge [bend left, left] node [swap] {a} (q4)
		edge [left] node [swap] {B} (q7)
		(q2) edge [left]  node [swap] {b} (q5) 
		edge [right] node {A} (q6)
		(q3) edge [left]  node [swap] {a} (q4) 
		edge [left] node {b} (q5)
		(q4) edge [loop above]  node [swap] {a} (q4) 
		edge [bend left] node {b} (q5)
		edge [bend right = 30, right] node {B} (q7)
		(q5) edge [loop above] node [swap] {b} (q5) 
		edge [bend left=20] node {a} (q4)
		edge [bend left=30] node {A} (q6)
		(q6) edge [loop above] node [swap] {A} (q6) 
		edge [bend left=20] node {b} (q5)
		edge [above] node {B} (q9)
		(q7) edge [loop above] node [swap] {B} (q7) 
		edge [bend left=10, left] node {A} (q8)
		edge [bend right = 40, right] node {a} (q4)
		(q8) edge [bend right=13, left] node [swap] {b} (q5) 
		edge [left] node {A} (q6)
		edge [bend left=10, right] node {B} (q7)
		(q9) edge [bend right=30, right] node [swap] {a} (q4) 
		edge [left] node {A} (q8)
		edge [bend left=70, right] node {B} (q7);
		\end{tikzpicture}
		\caption{A finite state automaton for torus with geodesic boundary} 
		\label{fig:automaton}
	\end{figure}
\end{example}

\subsection{Cross ratios}
Let $\rho_0: F_n \to \text{PSL}(2,\bbR)$ be the Fuchsian marking corresponding to a hyperbolic structure on $\Sigma$. Suppose $\gamma_{p,q}$ is an orthogeodesic on $\Sigma$ running from $a_p$ to $a_q$ with RedLex double coset representative $w$. Then its hyperbolic length $\text{length}(\gamma_{p,q})$ satisfies 

\begin{equation}\label{coth}
\coth\left(\dfrac{\text{length}(\gamma_{p,q})}{2}\right) = [\alpha_p^+,\alpha_p^-;w \cdot \alpha_q^+, w \cdot \alpha_q^-]
\end{equation}
where $\alpha_j^{+}$ and $\alpha_j^{-}$ are respectively the attracting and repelling fixed points of $\rho_0(\alpha_j)$ on $S^1_{\infty}$, and $[z_1,z_2;z_3,z_4] \defeq \dfrac{(z_1-z_3)(z_2-z_4)}{(z_1-z_4)(z_2-z_3)}$ is the cross ratio.

Hence, Basmajian's identity \ref{bas_torus} becomes
\begin{equation} \label{alg}
\sum_{j=1}^k l(\rho(\alpha_j)) = \sum_{p,q =1}^k \sum_{w \in \mathscr{L}_{p,q}}\log [\alpha_p^+,\alpha_p^-;w\cdot \alpha_q^+,w \cdot \alpha_q^-]
\end{equation}

\subsection{Complexified Basmajian's identity}
When we start deforming a Fuchsian marking, identity \ref{alg} can be analytically continued only when the series is absolutely convergent.

\begin{thm} \label{thm_cxBas_conv}
	Given a marked Schottky representation $\rho:F_n \to \text{PSL}(2,\bbC)$, the series in \ref*{alg} converges absolutely if and only if the Hausdorff dimension of the limit set $\Lambda_{\Gamma}$ of the Schottky group $\Gamma=\rho(F_n)$ is strictly less than one.
\end{thm}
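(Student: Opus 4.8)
The plan is to show that each term of the series \ref{alg} is comparable, up to multiplicative constants that are uniform in the word $w$, to the diameter of the image $w(D_q)$ of a Schottky disk, and then to recognize the resulting sum of diameters as a Poincar\'e-type series whose critical exponent is exactly $\text{dim}_{\text{H}}\Lambda_\Gamma$ via Bowen's formula. Absolute convergence of the $t=1$ series then amounts to the assertion that this critical exponent is $<1$, which is the content of the theorem.

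First I would establish the geometric comparison. Writing $z_3 = w\cdot\alpha_q^+$ and $z_4 = w\cdot\alpha_q^-$, both points lie in the small disk $w(D_q)\subset\Lambda_\Gamma$, whose distance to $\alpha_p^\pm$ is bounded below by the Schottky separation together with the RedLex double-coset conditions (which prevent $w$ from pointing back into the disk containing $\alpha_p^\pm$). Expanding the cross ratio to first order about any $\zeta\in w(D_q)$ gives
\begin{equation*}
\log[\alpha_p^+,\alpha_p^-;z_3,z_4] = (z_3 - z_4)\,\frac{\alpha_p^+ - \alpha_p^-}{(\alpha_p^+ - \zeta)(\alpha_p^- - \zeta)} + O(|z_3 - z_4|^2),
\end{equation*}
so that $|\log[\,\cdot\,]_w| \asymp |w\cdot\alpha_q^+ - w\cdot\alpha_q^-|$ with constants depending only on $\Gamma$. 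Here conformality is essential: since $\rho(w)$ is a M\"obius map and $\alpha_q^\pm$ are distinct points of $D_q$, bounded distortion gives $|w\cdot\alpha_q^+ - w\cdot\alpha_q^-| \asymp |w'|\cdot|\alpha_q^+-\alpha_q^-| \asymp \operatorname{diam}(w(D_q))$. Thus every term is comparable to a \emph{first} power of the contraction factor of $w$; the finitely many short words for which the first-order expansion is invalid contribute a finite amount and are irrelevant to convergence.

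Next I would pass to symbolic dynamics. The automaton of Example \ref{ex_torus} and its general analogue present $\mathscr{L}=\coprod_{p,q}\mathscr{L}_{p,q}$ as a regular language, hence as a subshift of finite type, while the inverse branches of the generators restricted to the Schottky disks form a conformal iterated function system with attractor $\Lambda_\Gamma$. The series $\sum_{w\in\mathscr{L}}\operatorname{diam}(w(D))^t$ is then a sum over cylinders of this subshift of the $t$-th power of their diameters. Invoking the Thermodynamic Formalism of Section 5, the transfer operator $\mathcal{L}_t$ associated to the potential $-t\log|f'|$ has spectral radius $e^{P(t)}$, and $\sum_{|w|=n}\operatorname{diam}(w(D))^t \asymp e^{nP(t)}$; summing over $n$ shows the series converges iff $P(t)<0$. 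Bowen's formula identifies the unique zero of the strictly decreasing function $P$ with $\text{dim}_{\text{H}}\Lambda_\Gamma$, so $P(1)<0$ iff $\text{dim}_{\text{H}}\Lambda_\Gamma<1$; combined with the comparison of the previous paragraph, this yields the theorem in both directions.

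The main obstacle I anticipate is concentrated in the first two steps. The delicate point is to make the comparison $|\log[\,\cdot\,]_w|\asymp\operatorname{diam}(w(D_q))$ genuinely \emph{uniform} over the infinitely many long words---controlling the distortion of $\rho(w)$ and keeping $\zeta$ uniformly away from $\alpha_p^\pm$---so that both convergence and divergence transfer cleanly between the identity series and the diameter series. A secondary subtlety is that the sum runs over double-coset representatives rather than over all of $\Gamma$: one must verify that restricting to the subshift $\mathscr{L}$ does not lower the pressure, which holds because the forbidden prefixes and suffixes only involve the elementary cyclic subgroups $H_p,H_q$, whose critical exponents are strictly smaller than $\text{dim}_{\text{H}}\Lambda_\Gamma$. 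Once these uniformities are secured, the thermodynamic input is a standard application of Bowen's formula.
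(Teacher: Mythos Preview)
Your proposal is correct and follows essentially the same route as the paper: linearize the log--cross-ratio via a first-order expansion to $|w\cdot\alpha_q^+ - w\cdot\alpha_q^-|$, compare these quantities to the cylinder diameters $|P_{\underline i}|$, and then invoke the Thermodynamic Formalism (Bowen's formula, or equivalently $\lambda_1<1\Leftrightarrow\text{dim}_\text{H}\Lambda_\Gamma<1$) to finish. The only cosmetic difference is in handling the double-coset restriction to $\mathscr{L}$: you argue via the critical exponents of the cyclic subgroups $H_p,H_q$, whereas the paper observes directly that the RedLex prefix/suffix conditions exclude only a fixed positive proportion of level-$n$ cylinders (explicitly $1/9$ in the one-holed torus model of Lemma~\ref{lem_torus_2}), so the restricted sum still captures the full growth rate.
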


The proof of the theorem uses the Thermodynamic Formalism and will be presented in section 6, after we introduce the necessary technical tools in section 5.

\begin{prop} \label{cor_unif_conv}
	Let $\rho_t:F_n \to PSL(2,\bbC)$ be a deformation of a Fuchsian marking $\rho_0$ in $\mathcal{S}_{<1}$. Then the series in \ref*{alg} converges uniformly on compact subsets for $t\in [0,1]$.
\end{prop}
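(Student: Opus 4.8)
The plan is to deduce uniform convergence from the absolute convergence supplied by Theorem~\ref{thm_cxBas_conv} together with a Weierstrass $M$-test, the essential point being that a dominating series can be chosen independently of $t$. First I would observe that since $t \mapsto \rho_t$ is continuous and $[0,1]$ is compact, the image $K \defeq \{\rho_t : t \in [0,1]\}$ is a compact subset of $\mathcal{S}_{<1}$. By Corollary~\ref{cor_ana_hdim} the map $t \mapsto \dim_{\text{H}} \Lambda_{\Gamma_t}$ is continuous (indeed analytic), hence attains a maximum $D < 1$ on the compact interval $[0,1]$. Thus the entire family has Hausdorff dimension uniformly bounded away from $1$.

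Next I would analyze the individual terms. Each summand $\log [\alpha_p^+,\alpha_p^-;w \cdot \alpha_q^+, w \cdot \alpha_q^-]$ is an analytic function of the parameter, and by the geometric estimates underlying the proof of Theorem~\ref{thm_cxBas_conv} its modulus is comparable, up to a multiplicative constant, to the contraction factor $\|\rho_t(w)'\|$ of the M\"obius word $w$ near the relevant fixed point. I would render these comparison constants uniform over $K$: since $K$ is compact, the fixed points $\alpha_j^{\pm}$, the generators $\rho_t(g_i)$, and their derivatives vary continuously and remain in a compact region of $\text{PSL}(2,\bbC)$ bounded away from the degenerate (parabolic) locus, so there exist constants $0 < c_1 \le c_2 < \infty$, independent of $t$, with $c_1 \|\rho_t(w)'\| \le |\log[\cdots]| \le c_2 \|\rho_t(w)'\|$ for every word $w$ and every $t \in [0,1]$.

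Then I would produce the dominating series from the Thermodynamic Formalism. The relevant Poincar\'e-type series $\sum_w \|\rho_t(w)'\|^s$ converges for $s > \dim_{\text{H}} \Lambda_{\Gamma_t}$, and because the transfer operator (equivalently, the pressure function $P_t(s)$) depends continuously on the parameter over the compact family $K$, one obtains a single exponent and a uniform bound: setting $M_w \defeq c_2 \sup_{t \in [0,1]} \|\rho_t(w)'\|$, the uniform dimension bound $D < 1$ yields $\sum_w M_w \le c_2 \sup_t \sum_w \|\rho_t(w)'\| < \infty$, the last supremum being finite because $P_t(1) < 0$ uniformly for $t \in [0,1]$. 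The Weierstrass $M$-test then delivers uniform convergence of the series in \ref{alg} on $[0,1]$.

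I expect the main obstacle to be exactly the uniformity in the last step: upgrading the pointwise-in-$t$ convergence of the Poincar\'e series to the uniform estimate $\sum_w \sup_t \|\rho_t(w)'\| < \infty$. This is a uniform version of the estimate proving Theorem~\ref{thm_cxBas_conv}, and its justification rests on the continuous dependence of the pressure function and of the spectral data of the transfer operator on the Schottky parameter over the compact set $K$, which forces the constants governing the exponential decay of the terms to be chosen uniformly.
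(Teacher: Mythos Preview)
Your approach is essentially the same as the paper's, and your identification of the key issue is on the mark. The paper's proof is two sentences: it simply says that the absolute series is bounded above by the geometric series $\sum_{k} \lambda_1^k$ (this is the estimate from the proof of Theorem~\ref{thm_cxBas_conv}), and that $\lambda_1$ is continuous in the parameter, hence bounded away from $1$ on the compact deformation, yielding a uniform majorant. Your argument unpacks exactly this: compactness of $K$, continuity of the pressure/top eigenvalue so that $P_t(1)<0$ (equivalently $\lambda_1(t)<1$) uniformly, and then a Weierstrass $M$-test.

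One small correction: the displayed inequality $\sum_w M_w \le c_2 \sup_t \sum_w \|\rho_t(w)'\|$ is written the wrong way round (in general $\sum_w \sup_t \ge \sup_t \sum_w$). As you yourself anticipate in the final paragraph, the correct fix is not to interchange $\sup$ and $\sum$ but to observe that the constants in the bound $\sum_{|w|=n}|\text{term}(t)| \le C(t)\,\lambda_1(t)^n$ from the proof of Theorem~\ref{thm_cxBas_conv} depend continuously on the parameter, so on the compact set $K$ one has $C(t)\le C_{\max}$ and $\lambda_1(t)\le \lambda_{\max}<1$, whence the single geometric series $\sum_n C_{\max}\lambda_{\max}^n$ dominates the tails uniformly in $t$. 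That is precisely the ``uniform version'' you describe, and it is what the paper is invoking.
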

\begin{proof}
	The proposition follows from the proof of Theorem \ref*{thm_cxBas_conv}. Since the absolute series is uniformly bounded above by the geometric series $\sum_{k=1}^{\infty} \lambda_1^k$, which is continuous, it is uniformly bounded.
\end{proof}

We are now ready to prove the main theorem of this section.

\begin{thm} [Complexified Basmajian's identity] \label{thm_cx_Bas}
	Suppose $\rho_0: F_n \to \text{PSL}(2, \bbC)$ is a Fuchsian marking corresponding to a hyperbolic surface $\Sigma$ with geodesic boundaries $a_1,\cdots, a_k$. Let $\alpha_1,\cdots, \alpha_k \in \pi_1M$ represent the free homotopy classes of $a_1,\cdots, a_k$. If $\rho$ is in the same path component as $\rho_0$ in $\mathcal{S}_{<1}$, then
	\begin{equation}\label{cx_Bas_id}
	\sum_{j=1}^k l(\rho(\alpha_j)) = \sum_{p,q =1}^k \sum_{w \in \mathcal{L}_{p,q}} \log [\alpha_p^+,\alpha_p^-;w\cdot \alpha_q^+,w \cdot \alpha_q^-] \text{~~mod~~} 2\pi i
	\end{equation}
	where $\alpha_j^+,\alpha_j^-$ are the attracting and repelling fixed points of $\rho(\alpha_j)$, respectively. Moreover, the series converges absolutely.
\end{thm}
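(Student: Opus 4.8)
The plan is to establish the identity by analytic continuation from the Fuchsian slice, using the convergence results of this section as the engine. First, absolute convergence is immediate: since $\rho$ lies in $\mathcal{S}_{<1}$, the limit set $\Lambda_\Gamma$ has Hausdorff dimension strictly less than one, so Theorem \ref{thm_cxBas_conv} gives absolute convergence of the right-hand side of \ref{cx_Bas_id}. It remains to prove the equality, and the strategy is to show that the two sides, regarded as branches of holomorphic functions along the deformation, agree throughout the path component because they agree on the real Fuchsian locus and are both holomorphic.

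Next I would check that both sides are holomorphic once branches are fixed. The left-hand side $\sum_{j=1}^k l(\rho(\alpha_j))$ is a finite sum of complex lengths, each an analytic function of the Schottky coordinates; the branch with positive real part is singled out and continues uniquely along any path in $\mathcal{S}_{<1}$ by the positivity lemma above, which keeps us away from the branch point $\mathrm{tr}^2=4$ of $\cosh^{-1}$. For the right-hand side, each term $\log[\alpha_p^+,\alpha_p^-;w\cdot\alpha_q^+,w\cdot\alpha_q^-]$ is holomorphic away from the locus where the cross ratio hits $0$ or $\infty$, which does not occur in $\mathcal{S}$ since loxodromic elements have distinct fixed points. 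By Proposition \ref{cor_unif_conv} the series converges uniformly on compact subsets along the deformation, so by the Weierstrass theorem on locally uniform limits the sum is holomorphic. Passing to the universal cover of the path component of $\rho_0$ in $\mathcal{S}_{<1}$ makes all these branches single-valued, so that the difference $H \defeq (\text{LHS}) - (\text{RHS})$ is a single-valued holomorphic function on a connected complex manifold.

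I would then identify $H$ with zero. At every Fuchsian marking the real Basmajian identity \ref{alg} holds verbatim, with the complex lengths real positive and each cross ratio equal to $\coth(\mathrm{length}/2) > 1$ by \ref{coth}; choosing these real branches shows that $H$ vanishes on the entire Fuchsian locus $\mathcal{T} \subset \mathcal{S}$. Since $\mathcal{T}$ is a maximal totally real submanifold of Schottky space (a standard fact, cf. \cite{Bers}), a holomorphic function vanishing on it vanishes on a full neighborhood; by the identity theorem $H \equiv 0$ on the whole universal cover of the path component, and in particular $H(\rho)=0$. This yields the asserted equality of the single-valued branches at $\rho$.

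Finally, the reduction $\mathrm{mod}\ 2\pi i$ and the appearance of monodromy must be addressed. Both sides are intrinsically defined only up to $2\pi i$ --- complex length because $\cosh^{-1}$ is multivalued, and each logarithm for the same reason --- so descending from the universal cover back to $\mathcal{S}_{<1}$ turns the exact equality into an equality modulo $2\pi i$; the deck (monodromy) action shifts individual branches by integer multiples of $2\pi i$, and absolute convergence guarantees that only finitely many terms can be affected, so the total ambiguity lies in $2\pi i\,\bbZ$. I expect the main obstacle to be precisely this bookkeeping: turning an infinite sum of genuinely multivalued logarithms into a bona fide holomorphic function and controlling its branches under continuation. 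This is exactly what Theorem \ref{thm_cxBas_conv} and Proposition \ref{cor_unif_conv} are designed to supply --- absolute convergence confines the monodromy to finitely many terms while local uniform convergence secures holomorphicity --- so the genuinely hard analytic input is deferred to those statements and the remaining argument is the continuation principle sketched above.
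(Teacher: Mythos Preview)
Your proposal is correct and follows essentially the same approach as the paper: both sides are holomorphic by uniform convergence (Proposition \ref{cor_unif_conv}), the identity holds on the Fuchsian locus which is totally real, and analytic continuation propagates it to the full path component modulo $2\pi i$. Your version is more explicit about the branch bookkeeping (passing to the universal cover, invoking Weierstrass, and tracking the monodromy), whereas the paper's proof is terse and simply records the totally real slice argument together with the phrase ``up to $2\pi i$''; one small point is that the finiteness of the affected terms under monodromy is more naturally a consequence of \emph{uniform} convergence along the loop (as the paper notes in the monodromy subsection) than of absolute convergence alone.
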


\begin{proof}
	Let $\rho_t$ in $\mathcal{S}_{<1}$ be an analytic deformation of $\rho_0$. By Proposition \ref{cor_unif_conv}, each side of identity \ref*{cx_Bas_id} is a holomorphic function on $\mathcal{S}_{<1}$ (up to $2\pi i$). In a neighbourhood $U$ of $\rho_0$, there exist local coordinates such that the Fuchsian markings are a totally real subspace of $U$. Therefore, the identity holds true for the entire neighbourhood $U$. Hence, by analytic continuation, the identity holds for all $\rho_t$ (up to $2\pi i$).
\end{proof}

\subsection{Monodromy}
In this subsection, we exhibit two different loops in $\mathcal{S}_{<1}$ along which the identity exhibits nontrivial monodromy. As we analytically continue \ref*{cx_Bas_id} along a loop in $\mathcal{S}_{<1}$ the value of the right hand side might change by multiples of $2\pi i$. We call this the {\it monodromy} of the loop and observe that it depends only on its homotopy class in $\mathcal{S}_{<1}$. Note that the uniform convergence throughout a compact loop in $\mathcal{S}_{<1}$ implies that only finitely many words can change monodromy.

Let $F_2 = \langle a,b \rangle$ be a free group on two generators with $A = a^{-1}$ and $B = b^{-1}$. Let $L,x \in \bbC$ be such that $x+1/x = -L$. Consider the Schottky groups $\Gamma_L \defeq \rho(\langle a,b \rangle) = \langle X_L, Y_L \rangle$ and $\Gamma'_L \defeq \rho'(\langle a,b \rangle) = \langle X_L^2, X_LY_L^3 \rangle$, where
$$X_L = \left[ \begin{array}{cc}
L & 1 \\
-1 & 0 \end{array} \right] \text{ , } 
Y_L = \left[ \begin{array}{cc}
0 & x \\
-\frac{1}{x} & L \end{array} \right].$$
Now let $L_t = 5e^{2\pi it}$. Then as $t$ increases from $0$ to $1$, the trajectory of $\Gamma_{L_t}$ forms a loop $\gamma$ in the $\text{PSL}(2,\bbC)-$character variety of $F_2$ which connects the Fuchsian group $\Gamma_{L_0}$ to itself. Similarly, the trajectory of $\Gamma'_{L_t}$ defines another loop $\gamma'$.

Numerical calculations suggest that both $\gamma$ and $\gamma'$ are in fact in $\mathcal{S}_{<1}$ as the series are absolutely convergent. The homotopy class of these loop in $\mathcal{S}_{<1}$ is distinguished by their monodromy in Table 1.

\begin{table}[h] \label{table_mon}
\begin{center}
	\begin{tabular}{ |c|c|c| } 
		\hline
		Words & Monodromy along $\gamma$\text{~~} & Monodromy along $\gamma'$ \\ 
		\hline
		a & $2\pi$ & $10\pi$\\ 
		\hline
		b & $2\pi$ & $6\pi$\\  
		\hline
		A & $2\pi$ & $10\pi$\\ 
		\hline 
		B & $2\pi$ & $6\pi$\\
		\hline
		ab & $2\pi$ & $0$\\  
		\hline 
		AB & $2\pi$ & $0$\\  
		\hline 
		aB & $0$ & $2\pi$\\  
		\hline
		Ab & $0$ & $2\pi$\\  
		\hline
		Total change & $12\pi$ & $36\pi$\\  
		\hline  
	\end{tabular}
	\vspace*{0.2cm}
\caption{Monodromy for the loops $\gamma$ and $\gamma'$}
\end{center}
\end{table}

\section{Quadratic Polynomials} \label{sec_Julia}
The main goal of this section is to introduce a Basmajian-type identity on Cantor Julia sets $J_c$ of complex quadratic polynomials $f_c(z) = z^2+c$ with $c$ lying outside of the Mandelbrot set $\mathcal{M}$. First of all, the series identity is obtained for $c < -2$ as in this case $J_c$ is a cut-out set in $\bbR$. Similar to the case of Schotkky groups, we state that the series is absolutely convergent if and only if the Hausdorff dimension of $J_c$ is strictly less than one, which enables us to analytically continue the identity. When we analytically continue the series along a nontrivial loop, the monodromy induces a new Basmajian-type identity on $J_c$. In addition, using the convergence of the series, we numerically plot the Hausdorff dimension one locus in $\bbC \setminus \mathcal{M}$.

\subsection{Basmajian-type identity}
Fix $c<-2$. Let $$T_1 = \sqrt{z-c} \text{ and } T_2 = -\sqrt{z-c}$$ be local branches of $f_c^{-1}$ and let $z_1, z_2$ be the fixed points of $T_1, T_2$ respectively. Then the Julia set $J_c$ is obtained as a cut-out set contained in the interval $U=[-z_1, z_1]$ as follows. Set $$U_0 \defeq U$$ For each $n \ge 1$, let 
$$U_n = T_1(U_{n-1}) \cup T_2(U_{n-1}) \text{ and  } I_n = U \setminus U_n.$$ Then $U_n$ consists of $2^n$ intervals whose lengths shrink with $n$ and in the limit $U_{\infty} = J_c$ is a Cantor set of zero measure.

\begin{lem}(Properties of gaps)
	\begin{enumerate}
		\item $I_1 = [T_2(-z_1),T_1(-z_1)]$ is the largest gap in $U$.
		\item Images of $I_1$ under $T_1$ and $T_2$ give rise to all the gaps (i.e. $I_{\infty}$) in $U$.
		\item The lengths of all the gaps add up to the length of $U$.
	\end{enumerate}
\end{lem}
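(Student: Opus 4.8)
The three assertions all flow from one recursive description of the complementary gaps, so the plan is to set up that description first and then read off each claim. The two ingredients I would establish at the outset are: the images are nested, $U_n \subseteq U_{n-1}$ for all $n$ (immediate by induction, since $T_1,T_2$ are monotone and $U_1 \subseteq U_0$ forces $T_i(U_n)\subseteq T_i(U_{n-1})$); and the two first-level pieces $T_1(U)=[T_1(-z_1),z_1]$ and $T_2(U)=[-z_1,T_2(-z_1)]$ are disjoint, separated precisely by the open interval $(T_2(-z_1),T_1(-z_1))$. Because $T_2=-T_1$ and $T_1(z_1)=z_1$, these endpoints are symmetric about $0$, so $I_1=[T_2(-z_1),T_1(-z_1)]$ is exactly the middle portion of $U$ removed when passing from $U_0$ to $U_1$. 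This gives the description of $I_1$ in (1).

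For (2), I would track the gaps created at each level. Set $G_n \defeq U_{n-1}\setminus U_n$, so that $G_1=I_1$ and, by the nesting, $I_\infty=\bigcup_{n\ge 1}G_n$. For $n\ge 2$ write $U_{n-1}=T_1(U_{n-2})\sqcup T_2(U_{n-2})$ and $U_n=T_1(U_{n-1})\sqcup T_2(U_{n-1})$; since the $T_1$-image and the $T_2$-image lie in the two disjoint halves separated by $I_1$, and each $T_i$ is injective, I can distribute the set difference over the two branches to obtain $G_n = T_1(G_{n-1})\cup T_2(G_{n-1})$. Iterating this identity gives $G_n=\bigcup_{|w|=n-1}w(I_1)$, and hence $I_\infty=\bigcup_{w\in\{T_1,T_2\}^*}w(I_1)$, which is exactly (2). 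The same bookkeeping shows the gaps are pairwise disjoint: distinct words of the same length land in distinct components of $U_{n-1}$, and the $G_n$ are disjoint by telescoping.

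Claim (3) is then essentially measure-theoretic. Since $\bigcap_n U_n=J_c$ and the gaps are disjoint open intervals exhausting $U\setminus J_c$, the total length of the gaps equals $|U\setminus J_c|=|U|-|J_c|$. As $J_c$ has zero Lebesgue measure (part of the standing hypothesis for $c<-2$), this equals $|U|$, giving (3).

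The part of (1) still to be proved --- that $I_1$ is the largest gap --- is where I expect the real work, and it is the step I would flag as the main obstacle. It amounts to showing $|w(I_1)|<|I_1|$ for every nonempty word $w$. The endpoints $\pm(T_1(-z_1))$ of $I_1$ lie in $J_c$ (they are images of $-z_1\in J_c$ under $T_1,T_2$), so $f_c^{k}$ carries the closure of any level-$k$ gap $w(I_1)$ diffeomorphically onto $I_1$, and by the mean value theorem it suffices to bound $|(f_c^{k})'|$ from below by $1$ on the gaps. The subtlety is that $f_c$ is not uniformly expanding in a single step near $c=-2$: the one-step factor $|T_i'(z)|=\tfrac{1}{2\sqrt{z-c}}$ blows up as $\sqrt{-z_1-c}\to 0$, so a naive term-by-term estimate fails. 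The clean way around this is to use that for $c<-2$ the Julia set $J_c$ is a hyperbolic repeller, so there are constants $C>0$ and $\lambda>1$ with $|(f_c^{k})'|\ge C\lambda^{k}$ on a neighborhood of $J_c$ (with bounded distortion), giving $|w(I_1)|\le C^{-1}\lambda^{-|w|}|I_1|$; this settles all sufficiently long words at once, and the finitely many short words can be checked directly, the length-one case being immediate since $-c>2$ forces the relevant sum of square roots to exceed $1$.
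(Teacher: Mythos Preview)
Your argument is correct and, for parts (2) and (3), is simply a fleshed-out version of the paper's: the paper says (2) ``follows directly from the definitions'' and (3) follows because $J_c$ has measure zero, which is exactly what you unpack with the recursion $G_n=T_1(G_{n-1})\cup T_2(G_{n-1})$ and the measure identity $|U\setminus J_c|=|U|$.

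For (1), however, you are being \emph{more} careful than the paper. The paper's entire proof of (1) is the sentence ``follows from the monotonicity of $T_1$ and $T_2$ and the fact that they are contractions.'' Your worry about this is well founded: for $c$ slightly less than $-2$ one has $|T_i'(-z_1)|=\big(2\sqrt{-z_1-c}\big)^{-1}>1$, so $T_1,T_2$ are \emph{not} pointwise contractions on $U$, and the one-step argument the paper gestures at does not literally go through. Your fix---use hyperbolicity of $f_c$ on a neighborhood of $J_c$ to get $|(f_c^k)'|\ge C\lambda^k$ (with bounded distortion), hence $|w(I_1)|\le C^{-1}\lambda^{-|w|}|I_1|$ for all long words, and check short words by hand---is a valid way to close the gap. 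Your length-one check is also sharper than you state: writing $a=\sqrt{-z_1-c}$ one gets
\[
\frac{|T_i(I_1)|}{|I_1|}=\frac{\sqrt{a-c}-\sqrt{-a-c}}{2a}=\frac{1}{\sqrt{a-c}+\sqrt{-a-c}}\le\frac{1}{\sqrt{-2c}}<\frac{1}{2},
\]
so the first-level gaps are already less than half of $|I_1|$; this estimate can be iterated (using that all higher gaps sit in $U_1$, away from the origin, where $|f_c'|=2|z|\ge 2a$) to make the ``finitely many short words'' step more explicit if desired. In short: same destination as the paper, but you have spotted and repaired a point the paper glosses over.
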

\begin{proof}
	(1) follows from the monotonicity of $T_1$ and $T_2$ and the fact that they are contractions.
	(2) follows directly from the definitions.
	Since $U$ is the smallest interval containing $J_c$ and the Cantor set has measure zero, (3) follows.  
\end{proof}

By part (3), we have the following identity
\begin{equation} \label{Julia_id}
z_1 - (-z_1) = \sum_{w \in \{T_1,T_2\}^*} (-1)^\eta \Big( w(T_1(-z_1)) - w(T_2(-z_1)) \Big)
\end{equation}
where $\eta$ is the number of $T_2$'s in the word $w$.

For a general parameter $c \in \bbC \setminus \mathcal{M}$, we will show in section \ref{proof_conv_thm} that
\begin{thm} \label{thm_Julia_conv}
	The series in \ref{Julia_id} is absolutely convergent if and only if the Hausdorff dimension of $J_c$ is strictly less than one.
\end{thm}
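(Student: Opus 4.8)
The plan is to apply the Thermodynamic Formalism to the conformal expanding repeller $(f_c, J_c)$, exactly as in the parallel proof of Theorem \ref*{thm_cxBas_conv} for Schottky groups. Since the factors $(-1)^\eta$ have modulus one, absolute convergence of \ref{Julia_id} is equivalent to the finiteness of
\begin{equation*}
\Sigma \defeq \sum_{w \in \{T_1,T_2\}^*} \big| w(T_1(-z_1)) - w(T_2(-z_1)) \big|.
\end{equation*}
By the Lemma on gaps, $T_1(-z_1)$ and $T_2(-z_1)$ are the two endpoints of the seed gap $I_1$, so each summand is the distance between the images of the endpoints of $I_1$ under the composition $w = T_{i_1}\circ\cdots\circ T_{i_n}$. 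The first step is thus to recognize $\Sigma$ as a sum comparable to the diameters of the cylinder images $w(I_1)$, and to reduce it to a sum of derivatives $\sum_w |w'(\xi_w)|$.

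Second, I would set up the dynamics. For $c \notin \mathcal{M}$ the critical orbit escapes, so $0 \notin J_c$, the map $f_c$ is uniformly expanding on the Cantor repeller $J_c$, and the inverse branches $T_1, T_2$ extend to univalent maps on a fixed neighborhood $W \supset \overline{U}$ with $T_i(W) \Subset W$. The Koebe distortion theorem then gives a bounded distortion property: there is a constant $C \ge 1$, independent of the word $w$, such that $C^{-1} |w'(\xi)| \le |w(x) - w(y)|/|x-y| \le C |w'(\xi)|$ for all $x, y \in \overline{U}$ and any $\xi$ in the corresponding cylinder. Since $|T_1(-z_1) - T_2(-z_1)| = |I_1|$ is a fixed nonzero constant, this yields $\Sigma \asymp \sum_w |w'(\xi_w)|$, with implied constants depending only on $C$ and $|I_1|$.

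Third, I would invoke the transfer (Ruelle) operator $\mathcal{L}_s$ associated with the geometric potential $-s\log|f_c'|$, whose iterates satisfy $(\mathcal{L}_s^n \mathbf{1})(x) = \sum_{|w|=n} |w'(x)|^s$, where the sum is over inverse-branch compositions of length $n$ and $|w'(x)| = |(f_c^n)'(w(x))|^{-1}$. By the Ruelle--Perron--Frobenius theorem (Section 5), $\mathcal{L}_s^n \mathbf{1} \asymp e^{nP(s)}$ uniformly on $J_c$, where $P(s) = P(-s\log|f_c'|)$ is the topological pressure. Taking $s = 1$ and combining with bounded distortion,
\begin{equation*}
\Sigma \asymp \sum_{n \ge 0} \sum_{|w|=n} |w'(\xi_w)| \asymp \sum_{n \ge 0} e^{nP(1)},
\end{equation*}
a geometric series; hence $\Sigma < \infty$ if and only if $P(1) < 0$. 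Finally, Bowen's formula identifies $\text{dim}_\text{H} J_c = s_0$, the unique zero of the continuous, strictly decreasing function $s \mapsto P(s)$ (strict monotonicity because $\log|f_c'|$ is bounded below by a positive constant on the repeller). Therefore $P(1) < 0$ if and only if $s_0 < 1$, i.e. if and only if $\text{dim}_\text{H} J_c < 1$, which is the desired equivalence.

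The hard part will be the analytic input of the second and third steps: verifying uniform hyperbolicity of $f_c$ on $J_c$ and the univalent extension of the inverse branches to a common neighborhood (so that Koebe applies with a single distortion constant for all words simultaneously), and then justifying the uniform comparability $\mathcal{L}_s^n\mathbf{1} \asymp e^{nP(s)}$ together with Bowen's formula in this setting. These are precisely the places where conformality of $f_c$ is essential, and they are the technical heart deferred to Section 6; once they are in place, the reduction of $\Sigma$ to $\sum_w|w'|$ and the pressure computation are essentially formal.
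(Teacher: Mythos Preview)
Your proposal is correct and follows essentially the same route as the paper: both arguments reduce the absolute series to $\sum_{|w|=n}|w(I_1)|$, use bounded distortion to compare this with $\sum_{|\underline i|=n}|T_{\underline i}'|\asymp \lambda_1^n=e^{nP(1)}$ via Ruelle--Perron--Frobenius, and then invoke Bowen's formula to conclude that convergence is equivalent to $\lambda_1<1\iff\text{dim}_\text{H}J_c<1$. The only cosmetic difference is that the paper splits the two directions into separate lemmas and works with the Markov-partition diameters $|P_{\underline i}|$ and the eigenvalue $\lambda_1$, whereas you go directly through the derivatives $|w'|$ and the pressure $P(1)$; your single two-sided estimate $\Sigma\asymp\sum_n e^{nP(1)}$ is in fact a bit cleaner than the paper's presentation.
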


Again, analyticity of Hausdorff dimension (Corollary \ref{cor_ana_hdim}) implies that the subspace $\left(\bbC \setminus \mathcal{M}\right)_{<1} \defeq \{c \notin \mathcal{M} ~|~ \text{dim}_{\text{H}} J_c < 1 \}$ is open.

Hence, by analytic continuation, we obtain

\begin{thm} \label{prop_Julia_id}
	For complex parameter $c \in \left(\bbC \setminus \mathcal{M}\right)_{<1}$, let $T_1$ and $T_2$ be the two branches of $f_c^{-1}$ and $z_1$ be the fixed point of $T_1$, then the following identity holds
	\begin{equation} 
	z_1 - (-z_1) = \sum_{w \in \{T_1,T_2\}^*} (-1)^\eta \Big( w(T_1(-z_1)) - w(T_2(-z_1)) \Big)
	\end{equation}
	where $\eta$ is the number of $T_2$'s in the word $w$.
\end{thm}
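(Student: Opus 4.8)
The plan is to prove the identity first on the real ray $(-\infty,-2)$, where it is elementary, and then to propagate it to the whole component of $\left(\bbC\setminus\mathcal{M}\right)_{<1}$ by analytic continuation, exactly as in the proof of Theorem \ref{thm_cx_Bas}. The base case is already in hand: for real $c<-2$ the Julia set $J_c$ is a zero-measure Cantor set cut out of $U=[-z_1,z_1]$, so $\text{dim}_\text{H}J_c<1$ and $c\in\left(\bbC\setminus\mathcal{M}\right)_{<1}$; moreover the signed gap identity \ref{Julia_id} holds there, by part (3) of the Properties of gaps lemma together with the monotonicity that fixes the signs $(-1)^\eta$. Thus the ray $(-\infty,-2)$ is a non-discrete subset of the region, and it is the real seed I will continue from. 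Since $c$ is a single complex parameter, I do not need the full ``totally real subspace'' device from the Schottky case — a real interval with an accumulation point is already enough to pin down a holomorphic function.

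Next I would check that both sides are holomorphic in $c$ on $\left(\bbC\setminus\mathcal{M}\right)_{<1}$. The left-hand side is $2z_1$, where $z_1$ is the root of $z^2-z+c=0$ singled out by the branch followed along the continuation path, hence holomorphic once that branch is fixed. For the right-hand side, each term $w(T_1(-z_1))-w(T_2(-z_1))$ is holomorphic in $c$ — the two square-root branches $T_1,T_2$ of $f_c^{-1}$, the fixed point $z_1$, and all finite compositions $w$ vary holomorphically — while the combinatorial factor $(-1)^\eta$ is constant in $c$ for each fixed word. To pass from termwise holomorphy to holomorphy of the sum, I would upgrade the absolute convergence of Theorem \ref{thm_Julia_conv} to local uniform convergence on compact subsets, by the same mechanism as Proposition \ref{cor_unif_conv}: the transfer-operator estimates of section 6 bound the absolute series uniformly by a convergent geometric series on a neighborhood of any compact set. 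A locally uniform limit of holomorphic functions is holomorphic, so the right-hand side is holomorphic on the region.

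Finally I would invoke the identity theorem. The region $\left(\bbC\setminus\mathcal{M}\right)_{<1}$ is open by the analyticity of Hausdorff dimension (Corollary \ref{cor_ana_hdim}), and the two holomorphic functions agree on $(-\infty,-2)$, which accumulates at interior points of the path component containing it. Hence they agree throughout that component, giving the identity for every $c$ reachable from the real seed inside $\left(\bbC\setminus\mathcal{M}\right)_{<1}$.

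The main obstacle is not the continuation bookkeeping but branch consistency. The labels $T_1,T_2$ of the two square-root branches of $f_c^{-1}$, and correspondingly the choice of $z_1$ versus $z_2$, must be transported coherently along the continuation path; carrying them around a homotopically nontrivial loop in $\bbC\setminus\mathcal{M}$ interchanges the two labels, which is precisely the monodromy that produces the companion identity displayed after the theorem. Consequently the statement is well defined only relative to a chosen path from the real seed: the identity theorem equates the two holomorphic branches, but one must keep track of which determination of $z_1$ and which labeling of $T_1,T_2$ is in force. The genuinely nontrivial analytic input — the uniform geometric bound on the tail of the series that makes the right-hand side holomorphic — is exactly what the thermodynamic-formalism estimates of section 6 supply, and it is the one step I expect to require real work rather than routine verification.
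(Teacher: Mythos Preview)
Your proposal is correct and follows essentially the same route as the paper: the paper's proof is the single phrase ``by analytic continuation'' after establishing the real identity \ref{Julia_id} for $c<-2$ and invoking Theorem \ref{thm_Julia_conv} together with Corollary \ref{cor_ana_hdim}, and you have spelled out precisely this argument in detail, including the local uniform convergence step (the Julia analogue of Proposition \ref{cor_unif_conv}) and the branch-tracking caveat. One small quibble: the implication ``zero-measure Cantor set in $\bbR$'' $\Rightarrow$ ``$\text{dim}_\text{H}J_c<1$'' is not valid in general, so for the seed step you should instead cite the standard fact that $f_c$ is uniformly expanding on $J_c$ for $c<-2$, whence Bowen's formula gives $\text{dim}_\text{H}J_c<1$.
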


\subsection{Monodromy}
In subsection 5.4 we will see that the Julia set $J_c$ can be symbolically coded as the set of strings
$$\Sigma = \big\{ \underline i = (i_0, i_1, i_2,\cdots) ~|~ i_j \in  \{1,2\} \text{ for all } j \in \bbN \big\}.$$

The monodromy group respecting the dynamics can then be identified with $\text{Aut}(\Sigma)$ which is $\bbZ/2\bbZ$ (\cite{Hed}). Hence, the nontrivial monodromy $\phi: J_c \to J_c$ simply exchanges the labels $1$ and $2$ in the symbolic coding.

This map defines a new identity on $J_c$, since the original identity would still hold on $\phi(J_c) = J_c$ after being continued along a loop in $\left(\bbC \setminus \mathcal{M}\right)_{<1}$ starting and ending at $c$. Thus,
\begin{equation} 
z_2 - (-z_2) = \sum_{w \in \{T_1,T_2\}^*} (-1)^{\eta'} \Big( w(T_2(-z_2)) - w(T_1(-z_2)) \Big)
\end{equation}
where $\eta'$ is the number of $T_1$'s in the word $w$.

\subsection{Hausdorff dimension one locus}
Outside of the Mandelbrot set, by analyticity of Hausdorff dimension, the Hausdorff dimension one locus $S(1)$ is a closed analytic set and therefore it has to intersect every ray emanating from the origin. For each fixed ray, we numerically find the points at which the convergence of the series of absolute values changes. Our numerical results as shown in Figure 2 suggest that $S(1)$ is a topological circle connecting $-2$ to itself with a cusp at $-2$. Figure 3 is a zoomed in picture near $-2$. This gives numerical evidence for the following:
\begin{conj}
$\left(\bbC \setminus \mathcal{M}\right)_{>1}$ is star-shaped centered at $0$.
\end{conj}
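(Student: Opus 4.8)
The conjecture is most naturally read as the assertion that the bounded region $\mathcal{R}$ cut out by the curve $S(1)$ (which contains $\mathcal{M}$) is star-shaped about the origin; equivalently, that $S(1)$ is a radial graph meeting each ray from $0$ in a single point, so that $\left(\bbC \setminus \mathcal{M}\right)_{>1}$ is exactly the part of $\mathcal{R}$ lying outside $\mathcal{M}$. Since the excerpt already records that $S(1)$ is a closed analytic set meeting every ray from the origin, the plan is to reduce the whole statement to a single monotonicity assertion: writing $c = re^{i\theta}$ and $d(r,\theta) := \text{dim}_{\text{H}} J_{re^{i\theta}}$ for $re^{i\theta} \notin \mathcal{M}$, I would prove that $r \mapsto d(r,\theta)$ is strictly decreasing on each ray. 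Monotonicity forces $\{\,r : d(r,\theta) > 1\,\}$ to be a single interval abutting $\partial \mathcal{M}$, which is precisely the radial description of $\mathcal{R}$ being star-shaped.

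To prove monotonicity I would run the Thermodynamic Formalism of Sections 5--6. For $c \notin \mathcal{M}$ the map $f_c|_{J_c}$ is a conformal Cantor repeller uniformly conjugate to the full two-shift $(\Sigma,\sigma)$, and by Bowen's formula $d = d(r,\theta)$ is the unique zero $s$ of the pressure $P_c(s) := P\big(-s\log|f_c'|\big)$. Conjugating to the fixed shift by the (holomorphically moving) coding map $h_c : \Sigma \to J_c$ puts \emph{all} of the $c$-dependence into the potential, so $P_c$ is real-analytic in $(r,s)$ and $\partial_s P_c(s) = -\int \log|f_c'|\, d\mu_{c,s} = -\Lambda_{c,s} < 0$, where $\mu_{c,s}$ is the equilibrium state and $\Lambda_{c,s}>0$ the Lyapunov exponent. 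Differentiating $P_c(d) \equiv 0$ along the ray and using that $f_c'(z) = 2z$ carries \emph{no explicit} $c$-dependence --- so $\partial_r \log|f_c'|$ is purely the motion term $\partial_r \log|z| = \operatorname{Re}(\dot z / z)$ with $\dot z := \partial_r h_{re^{i\theta}}$ --- the implicit function theorem gives
\[
\partial_r d \;=\; -\,\frac{d}{\Lambda_{c,d}}\int_{J_c} \operatorname{Re}\!\Big(\frac{\dot z}{z}\Big)\, d\mu_{c,d}(z).
\]
Since $d>0$ and $\Lambda_{c,d}>0$, monotone decrease of $d$ is equivalent to the single inequality $\int_{J_c}\operatorname{Re}(\dot z/z)\,d\mu_{c,d} > 0$; that is, as $c$ moves radially outward the points of $J_c$ must, on $\mu_{c,d}$-average, move away from the origin.

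The geometric content is now transparent: enlarging $|c|$ pushes the Julia set outward (the inverse branches $\pm\sqrt{z-c}$ become uniformly more contracting as $|z-c|$ grows), and the task is to make the averaged statement $\int \operatorname{Re}(\dot z/z)\,d\mu_{c,d} > 0$ precise. I would compute the velocity $\dot z = e^{i\theta}\,\partial_c h_c$ from the holomorphic motion of $J_c$ over $\bbC\setminus\mathcal{M}$ --- concretely by differentiating the B\"ottcher/uniformizing coordinate on the complement, or the fixed-point equations $z = (T_{i_0}\!\circ T_{i_1}\!\circ\cdots)(z)$ for the coded points --- and then integrate against the equilibrium state using real-analyticity of $c \mapsto \mu_{c,s}$ together with standard bounded-distortion estimates for conformal repellers. \emph{This averaged sign estimate, uniformly in $\theta$ and $r$, is the main obstacle.} The two ends of each ray are the delicate regions: as $c \to \partial\mathcal{M}$ the two halves of the Cantor set merge, the repeller degenerates and $\Lambda_{c,d} \to 0$ (with $d$ tending to a large value, generically $2$), while the cusp at $c=-2$ is genuinely parabolic --- there $J_c$ is an interval --- and needs a separate analysis of how the thermodynamic quantities blow up. As a base case and sanity check, along the real ray $\theta = \pi$ (so $c < -2$) the Julia set is the cut-out set of Section 4.1 and monotonicity can be read off directly from Falconer's Theorem \ref{prop_Falc} applied to the explicit gap lengths; the remaining work is to propagate this controlled behavior to all directions and to rule out interior critical points of $d(r,\theta)$ through the two degenerations.
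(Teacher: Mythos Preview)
The paper does not prove this statement: it is explicitly recorded as a \emph{conjecture}, motivated solely by the numerical plot of the locus $S(1)$ (Figures~2 and~3). There is therefore no ``paper's own proof'' to compare your proposal against.

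Your proposal is not a proof either, and you say so yourself. The reduction to radial monotonicity of $d(r,\theta)=\dim_{\mathrm H} J_{re^{i\theta}}$ is a reasonable strengthening, and the implicit-function computation leading to
\[
\partial_r d \;=\; -\,\frac{d}{\Lambda_{c,d}}\int_{J_c}\operatorname{Re}\!\Big(\frac{\dot z}{z}\Big)\,d\mu_{c,d}
\]
is formally correct (the pressure is smooth in the parameters on $\bbC\setminus\mathcal{M}$, and $\partial_s P_c<0$). But the entire content of the conjecture has been transferred into the sign assertion
\[
\int_{J_c}\operatorname{Re}\!\Big(\frac{\dot z}{z}\Big)\,d\mu_{c,d}>0,
\]
which you do not establish and correctly flag as ``the main obstacle.'' The heuristic that increasing $|c|$ ``pushes $J_c$ outward'' is not an argument: the holomorphic motion $\dot z$ has no obvious sign, the equilibrium measure $\mu_{c,d}$ depends on $c$ in a complicated way, and there is no a priori reason the $\mu_{c,d}$-average of $\operatorname{Re}(\dot z/z)$ cannot change sign for some $\theta$. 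Nor is the real-axis base case via Falconer's theorem a step toward the general case; it gives no mechanism for ``propagating'' monotonicity to other rays. As written, then, this is a plausible program with its central analytic estimate missing --- which is consistent with the statement's status in the paper as an open conjecture.
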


\begin{figure}[h!]
\centering
\label{fig_hdim1}
\includegraphics[scale=0.6]{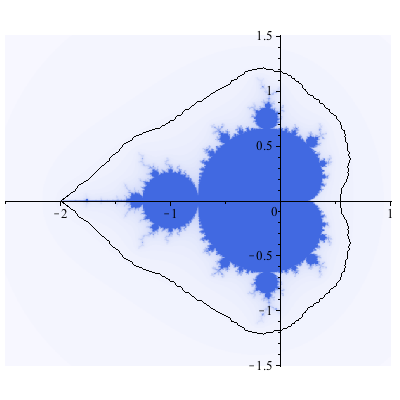}
\caption{Hausdorff dimension one locus in $\bbC \setminus \mathcal{M}$}
\end{figure}

\begin{figure}[h!]
\centering
\label{fig_hdim2}
\includegraphics[scale=0.6]{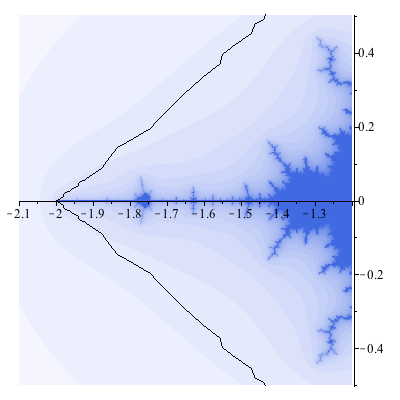}
\caption{Near $-2$}
\end{figure}
\newpage

\section{Thermodynamic Formalism} \label{sec_thermo}
Thermodynamic formalism is a powerful tool to study expanding conformal dynamical systems, especially to estimate Hausdorff dimensions of limit sets. As a generalization of Moran's theorem to nonlinear systems, Bowen (\cite{Bowen}) established a formula for computing Hausdorff dimension from the {\it pressure} function, which is also related to the maximum eigenvalue of the {\it Ruelle transfer operator}. 

In this section, we give a brief overview of Thermodynamic Formalism, following \cite{Pol} (see also \cite{Rue}). All theorems presented here are classical but we include some simple proofs for the convenience of the reader. In the end, we discuss its applications to Schottky groups and quadratic polynomials.

Throughout this section, $T : \Lambda \to \Lambda$ is a $C^{1+\alpha}$ locally expanding conformal map. The (local) branches of $T^{-1}$ are contractions which form an IFS and $\Lambda \subset \bbR^d$ is the limit set.

\subsection{Symbolic coding}
$T : \Lambda \to \Lambda$ is said to be a {\it Markov map} with respect to a {\it Markov partition} if there exists a finite collection $\mathcal{P}$ of closed subsets $\{P_i\}_{i=1}^m$ such that the following conditions are satisfied:
\begin{enumerate}
	\item $\Lambda = \bigcup_{i=1}^m P_i$;
	\item For each $i = 1,\cdots, m$, $P_i$ is the closure of its interior;
	\item For each $i = 1,\cdots, m$, $TP_i$ is a finite union of sets in $\mathcal{P}$.
\end{enumerate}
$\mathcal{P} = \{P_i\}_{i=1}^m$ is called a {\it Markov partition} for $\Lambda$.

Now suppose $T$ is an expanding conformal Markov map and its inverse $T^{-1}$ has $n$ (local) branches. Then the dynamics of the IFS consisting of branches of $T^{-1}$ can be symbolically coded.

Let $A$ be an $n \times n$ matrix such that
$$A_{i,j} =  \begin{cases} 1 &\mbox{if } T(P_i) \supset P_j \\ 
0 & \mbox{otherwise }  \end{cases} $$

Since $T$ is expanding,  $A$ is {\it aperiodic}, i.e. there exists an integer $N>0$ such that all the entries of $A^N$ are positive. For such a matrix $A$, define (one-sided) {\it subshift of finite type} 
$$\Sigma = \{\underline{i} = (i_j)_{j=0}^{\infty} ~|~ i_j \in \{1,2,\cdots, n\}, A_{i_j,i_{j+1}}=1\}.$$

Endow a metric on $\Sigma$ $$d(\underline{x},\underline{y}) = \dfrac{1}{2^n}$$ where $n$ is the first place $\underline{x}$ and $\underline{y}$ differ. With the topology induced by $d$, $\Sigma$ is a Cantor set. 

There is a H\"{o}lder continuous projection 
$$\pi: \Sigma \to X$$ such that $\underline{i} = (i_0, i_1, i_2\cdots)$ gives the sequence of sets $P_{i_0}, P_{i_1}, P_{i_2} \cdots$ visited by the forward orbit of $\pi(\underline{i})$.

The {\it shift map} $\sigma: \Sigma \to \Sigma$ defined by 
$$\sigma(i_0,i_1,i_2,\cdots) = (i_1, i_2, i_3,\cdots)$$ is locally expanding with respect to the metric $d$. The symbolic dynamical system $(\Sigma,\sigma)$ is conjugate to $(\Lambda, T)$ and is called a {\it symbolic coding}.

\subsection{Transfer operator, pressure and Hausdorff dimension}
In this subsection, we establish Bowen's formula and deduce that Hausdorff dimension is analytic for an analytic perturbation of $T$.

Throughout the subsection, we adopt the following notation. For each $\underline{i} = (i_0, i_1,\cdots) \in \Sigma$, write $T_{\underline i} = f_{i_n}\circ \cdots \circ f_{i_1} $, where $f_1, \cdots ,f_n$ are the local inverse branches of $T$, and $P_{\underline i} = f_{i_n}\circ \cdots \circ f_{i_1}P_{i_0}$. Denote $|P_{\underline i}|$ the diameter of the set $P_{\underline i}$.

In order to find Hausdorff dimension of $\Lambda$, we can cover $\Lambda$ by (open neighbourhoods of) $P_{\underline i}$ with $|P_{\underline i}| < \delta$. Then the $t$-dimensional Hausdorff measure of $\Lambda$ is estimated by $\sum_{|\underline i| = n} |P_{\underline i}|^t$, for large $n$ and $0< t < d$. 

\begin{prop}[\cite{Pol}, Proposition 2.4.1] \label{estimate}
	~~
	\begin{enumerate}
		\item There are positive constants $B_1$ and $B_2$ such that for any $\underline{i}$ and $x,y \in \Lambda$, $$B_1 \le \dfrac{|T_{\underline{i}}'(x)|}{|T_{\underline{i}}'(y)|} \le B_2.$$
		\item There exist constants $C_1$ and $C_2$ such that for any $\underline i$ and $z \in \Lambda$, $$C_1 \le \dfrac{|P_{\underline i}|}{|T_{\underline i}'(z)|} \le C_2.$$ In particular, there are constants $C_1$ and $C_2$ such that for any $n \ge 1$ and $z \in \Lambda$, $$C_1 \le \dfrac{\sum_{|\underline i| = n}|P_{\underline i}|^t}{\sum_{|\underline i| = n}|T_{\underline i}'(z)|^t} \le C_2.$$
	\end{enumerate}
\end{prop}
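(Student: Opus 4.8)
The plan is to prove this in the standard way for $C^{1+\alpha}$ conformal expanding systems: Part (1) is the bounded distortion estimate, Part (2) follows from it via the conformal mean value inequality, and the ``in particular'' clause is then immediate. For Part (1), I would expand $|T_{\underline i}'|$ using the chain rule. Setting $x^{(0)} = x$ and $x^{(k)} = f_{i_k}\circ\cdots\circ f_{i_1}(x)$ (and likewise $y^{(k)}$), one has $|T_{\underline i}'(x)| = \prod_{k=1}^n |f_{i_k}'(x^{(k-1)})|$, so that $\log\bigl(|T_{\underline i}'(x)|/|T_{\underline i}'(y)|\bigr)$ telescopes into $\sum_{k=1}^n \bigl(\log|f_{i_k}'(x^{(k-1)})| - \log|f_{i_k}'(y^{(k-1)})|\bigr)$. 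Since $T$ is $C^{1+\alpha}$ and expanding, each branch $f_{i_k}$ is $C^{1+\alpha}$ with derivative bounded away from $0$ and $\infty$, so $\log|f_{i_k}'|$ is $\alpha$-Hölder with a uniform constant $C$; hence each summand is at most $C|x^{(k-1)} - y^{(k-1)}|^\alpha$. Uniform contraction $|f_{i_j}'| \le \lambda < 1$ gives $|x^{(k-1)} - y^{(k-1)}| \le \lambda^{k-1}\,\text{diam}\,\Lambda$, so the full sum is dominated by $C(\text{diam}\,\Lambda)^\alpha \sum_{k\ge 0}\lambda^{\alpha k} = C(\text{diam}\,\Lambda)^\alpha/(1-\lambda^\alpha)$, a bound independent of $\underline i$, $n$, $x$, and $y$. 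Exponentiating produces $B_1$ and $B_2$.

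For the first inequality of Part (2), I would compare $|P_{\underline i}| = \text{diam}(T_{\underline i}(P_{i_0}))$ with $|T_{\underline i}'(z)|$. Conformality means the derivative of $T_{\underline i}$ is a positive scalar times an orthogonal map, so for $u,v \in P_{i_0}$ the displacement $|T_{\underline i}(u)-T_{\underline i}(v)|$ is pinched between $\bigl(\inf_{P_{i_0}}|T_{\underline i}'|\bigr)|u-v|$ and $\bigl(\sup_{P_{i_0}}|T_{\underline i}'|\bigr)|u-v|$; taking $u,v$ to realize $\text{diam}\,P_{i_0}$ and using Part (1) to replace both $\inf$ and $\sup$ by $|T_{\underline i}'(z)|$ up to the factors $B_1,B_2$ yields $|P_{\underline i}| \asymp |T_{\underline i}'(z)|\cdot \text{diam}\,P_{i_0}$. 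Because $P_{i_0}$ ranges only over the finite Markov partition $\{P_1,\dots,P_m\}$, the diameters $\text{diam}\,P_{i_0}$ are bounded above and below by positive constants, which I absorb into $C_1$ and $C_2$.

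The ``in particular'' statement follows by raising the pointwise comparison to the $t$-th power and summing over admissible words of length $n$: from $C_1|T_{\underline i}'(z)| \le |P_{\underline i}| \le C_2|T_{\underline i}'(z)|$ termwise, with constants independent of $\underline i$, I obtain $C_1^t \sum_{|\underline i|=n}|T_{\underline i}'(z)|^t \le \sum_{|\underline i|=n}|P_{\underline i}|^t \le C_2^t \sum_{|\underline i|=n}|T_{\underline i}'(z)|^t$, and relabeling $C_1^t,C_2^t$ gives the claim. The main obstacle is Part (1): the crux is that $C^{1+\alpha}$ regularity makes $\log|f'|$ genuinely Hölder while uniform expansion makes the telescoped errors summable as a geometric series, and both ingredients are essential — without the $\alpha$-Hölder control or the exponential contraction the uniform bound would fail. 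Part (2) is then a routine consequence, the only subtlety being the use of conformality (rather than mere differentiability) to secure a two-sided diameter comparison in $\bbR^d$.
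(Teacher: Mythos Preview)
The paper does not supply its own proof of this proposition: it is quoted verbatim from Pollicott's lecture notes \cite{Pol} (Proposition~2.4.1) and used as a black box, so there is no in-paper argument to compare against. Your proposal is the standard bounded-distortion proof one finds in that reference and elsewhere, and it is correct: the chain-rule telescoping together with the $\alpha$-H\"older control on $\log|f_j'|$ and the geometric decay of $|x^{(k-1)}-y^{(k-1)}|$ gives Part~(1), conformality plus Part~(1) gives the two-sided diameter comparison in Part~(2), and the summed inequality is then termwise. The only point worth flagging is that the lower bound $|T_{\underline i}(u)-T_{\underline i}(v)| \ge \bigl(\inf_{P_{i_0}}|T_{\underline i}'|\bigr)|u-v|$ is most cleanly justified by applying the mean value inequality to the inverse $T_{\underline i}^{-1}$ (whose derivative has norm at most $1/\inf|T_{\underline i}'|$) rather than directly; you gesture at this with ``conformal mean value inequality,'' which is fine, but in $\bbR^d$ for $d\ge 2$ it relies on the image being contained in a domain where the inverse is defined and a path argument works. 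In the applications of this paper (Schottky disks, Julia-set pieces) the relevant sets are convex or nearly so and the branches extend analytically, so this is not an obstacle.
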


Therefore, we see that estimating $\sum_{|\underline i| = n} |P_{\underline i}|^t$ is equivalent to estimating $\sum_{|\underline i| = n}|T_{\underline i}'(z)|^t$. However, the latter quantity has a great advantage as it is related to the {\it transfer operator}, which we define now.

For $\alpha>0$, let $C^{\alpha}(\Sigma)$ be the Banach space of real-valued H\"{o}lder continuous functions $f : \Sigma \to \bbR$, i.e. $$|f(\underline{x})-f(\underline{y})| \le Cd(\underline{x},\underline{y})^{\alpha}$$
with H\"{o}lder norm given by 
$$||f||_{\alpha} = \sup_{\underline{x}}|f(\underline{x})| + \sup_{\underline{x} \neq \underline{y}}\dfrac{|f(\underline{x})-f(\underline{y})|}{d(\underline{x},\underline{y})^{\alpha}}$$

\begin{defn} [Transfer operator]
	For any H\"{o}lder continuous potential $\phi \in C^{\alpha}(\Sigma)$, define the {\it transfer operator} $\mathcal{L}_{\phi}: C^{\alpha}(\Sigma) \to C^{\alpha}(\Sigma)$ as $$\big(\mathcal{L}_{\phi}\psi\big)(\underline y) = \sum_{\sigma(\underline x) = \underline y}e^{\phi(\underline x)}\psi(\underline x).$$
\end{defn}

To establish Bowen's formula, we consider the following one-parameter family of H\"{o}lder potentials
$$\phi_t = -t\log|T'(\pi(\underline x))|, \text{ with } 0<t<d,$$
and the corresponding transfer operators $\mathcal{L}_t \defeq \mathcal{L}_{\phi_t}$. By straightforward computation, $\mathcal{L}_t: C^{\alpha}(\Lambda) \to C^{\alpha}(\Lambda)$ is given by
$$\big(\mathcal{L}_t\psi\big)(z) = \sum_{j=1}^{n} |f_j'(x)|^t\psi(f_j(z)).$$

Again, straightfoward computation shows that the $n$-th iterate of $\mathcal{L}_t$ evaluated at the constant function $\mathds{1}$ gives the desired quantity, i.e. 
$$\big(\mathcal{L}_t^n \mathds{1}\big)(z) = \sum_{|\underline i| = n}|T_{\underline i}'(z)|^t, \text{ for any } z \in \Lambda.$$

The following celebrated Ruelle-Perron-Frobenius theorem will allow us to see that the Hausdorff dimension of $\Lambda$ is in fact related to the spectrum of $\mathcal{L}_t$.  

\begin{thm}[Ruelle-Perron-Frobenius Theorem, \cite{Pol}, Proposition 2.4.2] \label{thm_RPF}
~~
\begin{enumerate}
	\item $\mathcal{L}_t$ has a simple maximum eigenvalue $\lambda_t >0$ and there is exponential convergence $|\mathcal{L}_t^n \mathds{1} - \lambda_t^n| \le C\lambda_t^n\theta^n$, for some $C > 1$, $0<\theta <1$ and $n \ge 1$.
	\item There exists a probability measure $\mu$ and $D_1$,$D_2>0$ such that for any $n \ge 1$ and $|\underline{i}|=n$ and $x \in \Lambda$
	$$D_1\lambda_t^n \le \dfrac{\mu(P_{\underline{i}})}{|T_{\underline{i}}'(x)|^t}\le D_2\lambda_t^n.$$
	\item The map $\lambda(t) = \lambda_t$ is real analytic and $\lambda'(t) <0$ for all $t \in \bbR$.
\end{enumerate}
\end{thm}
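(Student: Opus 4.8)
The statement is the Ruelle--Perron--Frobenius theorem for the positive operator $\mathcal{L}_t$, and the plan is to prove its three parts by the standard quasi-compactness (equivalently, cone-contraction) machinery of thermodynamic formalism, feeding in the bounded distortion estimates of Proposition \ref{estimate} as the essential geometric input. For Part (1), the key analytic step is a Lasota--Yorke inequality of the form $[\mathcal{L}_t^n\psi]_\alpha \le A\kappa^n[\psi]_\alpha + B\|\psi\|_\infty$, where $[\cdot]_\alpha$ is the H\"older seminorm and $\kappa<1$ is the contraction ratio of the inverse branches; Proposition \ref{estimate}(1) is precisely what guarantees that the weights $|T_{\underline i}'|^t$ distort boundedly under iteration, so that the H\"older constant is genuinely contracted. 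By the Ionescu--Tulcea--Marinescu theorem this inequality makes $\mathcal{L}_t$ quasi-compact on $C^\alpha(\Sigma)$, so outside any disc about $0$ its spectrum consists of finitely many eigenvalues of finite multiplicity. Positivity of $\mathcal{L}_t$ together with aperiodicity of the transition matrix $A$ (which makes a power of $\mathcal{L}_t$ strictly positive) then yields the Perron--Frobenius conclusion: the spectral radius $\lambda_t$ is an eigenvalue with a strictly positive eigenfunction $h_t$, it is simple, and the rest of the spectrum lies in a disc of radius $\lambda_t\theta$ for some $\theta<1$. Decomposing $\mathds{1}$ along the rank-one spectral projection onto $h_t$ and its complementary invariant subspace then gives the exponential bound $|\mathcal{L}_t^n\mathds{1}-\lambda_t^n|\le C\lambda_t^n\theta^n$.

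For Part (2), I would construct the dual eigenmeasure: the adjoint $\mathcal{L}_t^*$ acting on probability measures has a fixed point $\nu_t$ of the map $\nu\mapsto \mathcal{L}_t^*\nu/(\mathcal{L}_t^*\nu)(\mathds{1})$ by the Schauder--Tychonoff theorem, and this $\nu_t$ satisfies $\mathcal{L}_t^*\nu_t=\lambda_t\nu_t$. The Gibbs measure is then $\mu = h_t\,\nu_t$, normalized so that $\nu_t(h_t)=1$. The required two-sided bound is exactly the Gibbs property of $\mu$: iterating the dual eigen-relation $n$ times expresses $\nu_t(P_{\underline i})$ as a power of $\lambda_t$ times $\int |T_{\underline i}'|^t\,d\nu_t$, and bounded distortion (Proposition \ref{estimate}(1)--(2)) pins this integral to $|T_{\underline i}'(x)|^t$ up to the uniform constants $B_1,B_2,C_1,C_2$; since $h_t$ is bounded above and below, passing from $\nu_t$ to $\mu$ only alters the constants, producing the stated $D_1,D_2$.

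For Part (3), analyticity follows from analytic perturbation theory. Since $\phi_t=-t\log|T'(\pi(\cdot))|$ is affine in $t$, the family $t\mapsto\mathcal{L}_t$ extends to a holomorphic family of bounded operators on $C^\alpha(\Sigma)$ for complex $t$; by Part (1) the eigenvalue $\lambda_t$ is simple and isolated, so Kato's theory makes $t\mapsto\lambda_t$ (and the spectral projection) holomorphic near every real $t$, hence real-analytic on $\bbR$. To obtain $\lambda'(t)<0$ I would differentiate $\mathcal{L}_t h_t=\lambda_t h_t$ and pair against $\nu_t$: using $\nu_t(h_t)=1$ and $\frac{d}{dt}\mathcal{L}_t\psi=\mathcal{L}_t\big((-\log|T'|)\,\psi\big)$, the first-order perturbation formula collapses to $\lambda'(t)/\lambda_t=\int(-\log|T'|)\,d\mu$. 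Because $T$ is expanding, $|T'|>1$ on $\Lambda$, so the integrand is strictly negative and $\lambda'(t)<0$.

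The main obstacle is Part (1): all of the genuine work lies in establishing the spectral gap, that is, in verifying the Lasota--Yorke inequality (equivalently, the strict contraction of the Birkhoff cone), and this is exactly the step where conformality and the bounded distortion of Proposition \ref{estimate} must be used with care. Once the simple leading eigenvalue and the gap are in hand, Parts (2) and (3) are essentially formal consequences.
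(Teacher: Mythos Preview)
The paper does not prove this theorem at all: it is stated as a citation of \cite{Pol}, Proposition 2.4.2, and no argument is given. So there is nothing in the paper to compare your proposal against.

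That said, your outline is the standard route to the Ruelle--Perron--Frobenius theorem and is essentially correct as a sketch. The Lasota--Yorke inequality plus Ionescu--Tulcea--Marinescu for quasi-compactness, positivity and aperiodicity to isolate a simple dominant eigenvalue with positive eigenfunction, the Schauder--Tychonoff fixed point for the dual eigenmeasure, the Gibbs bounds from bounded distortion, and Kato perturbation theory for analyticity with the variational formula $\lambda'(t)/\lambda_t = -\int \log|T'|\,d\mu$ for strict monotonicity --- this is exactly how the theorem is proved in the references the paper cites (Pollicott, Ruelle, Bowen). Your identification of the spectral gap (Part (1)) as the substantive step, with bounded distortion from Proposition \ref{estimate} as the geometric input, is accurate. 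For the purposes of this paper, however, the theorem is treated as a black box from the literature, so a proof is not expected.
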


Finally, we are in a position to state Bowen's formula for computing Hausdorff dimension.

\begin{defn} \label{def_pressure}
	For any continuous function $f: \Lambda \to \bbR$, define {\it pressure} $P(f)$ to be
	$$P(f) = \limsup_{n \to \infty} \dfrac{1}{n}\log \Big(\sum_{T^nx=x}e^{f(x)+f(Tx)+ \cdots +f(T^{n-1}x)}\Big).$$
\end{defn}

\begin{thm} [Bowen \cite{Bowen}] \label{Bowen_Ruelle}
	Let $T: \Lambda \to \Lambda$ be a $C^{1+\alpha}$ locally expanding map. Then the Hausdorff dimension $t$ of $\Lambda$ is the unique solution to $P(-t\log|T'|) = 0$.
\end{thm}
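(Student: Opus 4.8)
The plan is to convert the transcendental equation $P(-t\log|T'|)=0$ into the algebraic condition $\lambda_t=1$ on the leading eigenvalue of the transfer operator, and then to pin the unique solution between matching upper and lower bounds for the Hausdorff measure. First I would identify the pressure with $\log\lambda_t$, where $\phi_t=-t\log|T'|$. By the chain rule the Birkhoff sum is $\phi_t(x)+\cdots+\phi_t(T^{n-1}x)=-t\log|(T^n)'(x)|$, so the defining sum in Definition \ref{def_pressure} is $\sum_{T^nx=x}|(T^n)'(x)|^{-t}$. Periodic points of period $n$ correspond to admissible cyclic words of length $n$, and for the point $x$ lying in the cylinder $P_{\underline i}$ the quantity $|(T^n)'(x)|^{-1}$ is comparable to $|T_{\underline i}'(z)|$ with constants independent of $n$, which is exactly the bounded distortion of Proposition \ref{estimate}(1) applied to the inverse-branch composition. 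Hence the periodic-point sum is comparable, up to fixed multiplicative constants and up to the bounded cost of closing up words (controlled by aperiodicity of $A$), to $\sum_{|\underline i|=n}|T_{\underline i}'(z)|^t=(\mathcal{L}_t^n\mathds{1})(z)$. Since Theorem \ref{thm_RPF}(1) gives $(\mathcal{L}_t^n\mathds{1})(z)=\lambda_t^n(1+O(\theta^n))$, taking $\tfrac1n\log$ and letting $n\to\infty$ washes out the uniform constants and yields $P(\phi_t)=\log\lambda_t$.

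With this in hand the equation $P(-t\log|T'|)=0$ is equivalent to $\lambda_t=1$. By Theorem \ref{thm_RPF}(3) the map $t\mapsto\lambda_t$ is real analytic with $\lambda'(t)<0$, so it is strictly decreasing and there is exactly one $s$ with $\lambda_s=1$; this already establishes the uniqueness asserted in Theorem \ref{Bowen_Ruelle}. It then remains to show $\dim_{\text{H}}\Lambda=s$. For the upper bound I would fix $t>s$, so that $\lambda_t<\lambda_s=1$; Theorem \ref{thm_RPF}(1) gives $(\mathcal{L}_t^n\mathds{1})(z)\le C\lambda_t^n\to 0$, and Proposition \ref{estimate}(2) gives $\sum_{|\underline i|=n}|P_{\underline i}|^t\le C_2(\mathcal{L}_t^n\mathds{1})(z)\to 0$. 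Because $T$ is expanding the mesh $\max_{|\underline i|=n}|P_{\underline i}|\to 0$, so the $\{P_{\underline i}\}_{|\underline i|=n}$ form admissible covers of arbitrarily small diameter with vanishing $t$-content, whence $\mathcal{H}^t(\Lambda)=0$ and $\dim_{\text{H}}\Lambda\le t$ for every $t>s$, i.e. $\dim_{\text{H}}\Lambda\le s$.

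For the lower bound I would work at $t=s$ and use the equilibrium measure $\mu$ supplied by Theorem \ref{thm_RPF}(2). Since $\lambda_s=1$, part (2) combined with Proposition \ref{estimate} gives $\mu(P_{\underline i})\asymp|T_{\underline i}'(x)|^s\asymp|P_{\underline i}|^s$. I would then upgrade this cylinder estimate to a genuine ball estimate: for small $r$ a ball $B(x,r)$ meets only a bounded number of cylinders $P_{\underline i}$ of diameter comparable to $r$, choosing for each the generation at which its diameter first drops below $r$, so that $\mu(B(x,r))\le Cr^s$. The mass distribution principle (Frostman) then forces $\mathcal{H}^s(\Lambda)>0$, hence $\dim_{\text{H}}\Lambda\ge s$. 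Together with the upper bound this gives $\dim_{\text{H}}\Lambda=s$, the unique zero of $t\mapsto P(-t\log|T'|)$.

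The main obstacle I expect is this last upgrade, turning the clean estimate on the measure of cylinders into the Frostman bound $\mu(B(x,r))\le Cr^s$ for metric balls. This is the geometric heart of Bowen's formula: one must bound how many Markov cylinders of a given scale a ball can intersect, which relies essentially on the conformality of $T$ (so that the cylinders are comparably round and have comparable diameters within a generation) and on the bounded-overlap structure of the partition. By contrast the eigenvalue and pressure bookkeeping in the preceding steps is a largely mechanical consequence of Theorem \ref{thm_RPF} and Proposition \ref{estimate}.
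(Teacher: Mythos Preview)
The paper does not actually prove this theorem: it is stated as Bowen's classical result with a citation to \cite{Bowen}, and no argument is given. The paper only supplies short proofs for the surrounding corollaries (in particular Corollary \ref{pressure_cor}, which establishes $P(-t\log|T'|)=\log\lambda_t$, overlapping your first paragraph).

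Your proposal is a correct outline of the standard proof of Bowen's formula. The reduction $P(\phi_t)=\log\lambda_t$, the uniqueness via strict monotonicity of $t\mapsto\lambda_t$, the upper bound from $\sum_{|\underline i|=n}|P_{\underline i}|^t\asymp\lambda_t^n\to 0$ for $t>s$, and the lower bound via the Gibbs/equilibrium measure with $\mu(P_{\underline i})\asymp|P_{\underline i}|^s$ and the mass distribution principle are exactly the ingredients in Bowen's original argument and in the expository sources the paper draws on (e.g.\ \cite{Pol}). You are also right that the only nontrivial geometric step is passing from the cylinder estimate to the ball estimate $\mu(B(x,r))\le Cr^s$; conformality plus bounded distortion (Proposition \ref{estimate}) is precisely what makes cylinders comparably round and bounds the multiplicity at each scale. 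So there is nothing to compare against here beyond noting that your sketch reproduces the classical proof the paper is content to cite.
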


\begin{cor} \label{pressure_cor}
	$P(-t\log|T'|) = \displaystyle\limsup_{n \to \infty}\dfrac{1}{n}\log \left(\sum_{T^nx=x}\dfrac{1}{|(T^n)'(x)|^t}\right) = \log \lambda_t$.
\end{cor}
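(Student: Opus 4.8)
The statement contains two equalities, which I would establish separately: the first is a formal consequence of the chain rule, while the identification with $\log\lambda_t$ is the substantive point and relies on the Ruelle--Perron--Frobenius theorem. For the first equality I would specialize the definition of pressure (Definition \ref{def_pressure}) to the potential $f=-t\log|T'|$. Writing the Birkhoff sum $S_nf(x)=\sum_{k=0}^{n-1}f(T^kx)$ and using the chain rule $(T^n)'(x)=\prod_{k=0}^{n-1}T'(T^kx)$, one gets, for every $x$ with $T^nx=x$,
$$S_nf(x)=-t\sum_{k=0}^{n-1}\log|T'(T^kx)|=-t\log|(T^n)'(x)|,\qquad\text{so}\qquad e^{S_nf(x)}=\frac{1}{|(T^n)'(x)|^{t}}.$$
Substituting this into $P(-t\log|T'|)=\limsup_{n}\frac1n\log\sum_{T^nx=x}e^{S_nf(x)}$ yields the first equality verbatim.

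For the second equality the plan is to compare the periodic-orbit partition function with the transfer-operator iterate $(\mathcal{L}_t^n\mathds{1})(z)=\sum_{|\underline i|=n}|T_{\underline i}'(z)|^t$, whose growth is controlled by Theorem \ref{thm_RPF}. Using the symbolic coding $(\Sigma,\sigma)\cong(\Lambda,T)$, each $x$ with $T^nx=x$ has a cyclically admissible itinerary $\underline i$ of length $n$, and $x$ is the unique fixed point of the corresponding inverse branch $T_{\underline i}$; since $T^n\circ T_{\underline i}=\mathrm{id}$ near $x$, differentiating gives $|(T^n)'(x)|^{-1}=|T_{\underline i}'(x)|$. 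Hence the periodic sum equals $\sum_{\underline i\,\mathrm{cyclic}}|T_{\underline i}'(x_{\underline i})|^{t}$, where $x_{\underline i}\in\Lambda$ is that fixed point. The bounded-distortion estimate of Proposition \ref{estimate}(1) then lets me replace $|T_{\underline i}'(x_{\underline i})|$ by $|T_{\underline i}'(z)|$ for one fixed reference point $z$, changing each term only by a factor in $[B_1^{t},B_2^{t}]$, so up to these uniform constants the periodic sum is comparable to $\sum_{\underline i\,\mathrm{cyclic}}|T_{\underline i}'(z)|^t$.

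It remains to compare this cyclic sum with the full sum $(\mathcal{L}_t^n\mathds{1})(z)$ over all admissible words of length $n$. The upper bound is immediate, since cyclic words form a subset and all terms are positive. For a matching lower bound I would use aperiodicity of $A$: fixing $N$ with $A^N>0$ and one bridging word for each ordered pair of symbols, the operation of closing up an admissible word of length $n$ to a cyclically admissible word of length $n+N$ is injective and, since the finitely many branch derivatives are bounded away from $0$ and $\infty$ on the compact set $\Lambda$, alters the weight by a uniformly bounded factor; this bounds $(\mathcal{L}_t^n\mathds{1})(z)$ above by a constant multiple of the cyclic sum at length $n+N$. Finally, Theorem \ref{thm_RPF}(1) gives $|(\mathcal{L}_t^n\mathds{1})(z)-\lambda_t^n|\le C\lambda_t^n\theta^n$ with $0<\theta<1$, so $\tfrac1n\log(\mathcal{L}_t^n\mathds{1})(z)\to\log\lambda_t$; feeding the two comparisons into $\tfrac1n\log(\cdot)$, the bounded multiplicative constants and the index shift by $N$ contribute $O(1/n)$ and vanish in the limit, giving $\log\lambda_t$. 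I expect the last comparison to be the main obstacle: one must verify that closing up words via aperiodicity distorts the weighted counts by only bounded multiplicative factors, which is precisely where the uniform control of the branch derivatives on $\Lambda$ and the aperiodicity of $A$ are genuinely used.
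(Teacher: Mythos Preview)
Your proof is correct and follows the same strategy as the paper: the first equality is the chain rule applied to Definition~\ref{def_pressure}, and the second is obtained by comparing the periodic-point sum with $\mathcal{L}_t^n\mathds{1}$ via bounded distortion (Proposition~\ref{estimate}) and then invoking Theorem~\ref{thm_RPF}. You are in fact more careful than the paper on one point---the paper simply asserts $\sum_{T^nx=x}|(T^n)'(x)|^{-t}\asymp\mathcal{L}_t^n\mathds{1}(x')$ from Proposition~\ref{estimate}, whereas you explicitly justify the passage from cyclically admissible to all admissible words via the aperiodicity closing-up argument.
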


\begin{proof}
	The first equation follows from Definition \ref{def_pressure}.
	
	From Proposition \ref{estimate}, it follows that
	$$\sum_{T^nx=x}\dfrac{1}{|(T^n)'(x)|^t} \asymp \mathcal{L}_t^n \mathds{1} (x'), \text{ for any }x' \in \Lambda,$$	
	and by Theorem \ref{thm_RPF}, $\mathcal{L}_t^n \mathds{1} (x')\asymp \lambda_t^n$.
\end{proof}

\begin{cor} \label{analyticity_eigenvalue}
	The function $t \mapsto P(-t\log|T'|)$ is strictly monotone decreasing and analytic. 
\end{cor}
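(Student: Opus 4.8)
The plan is to recognize that both assertions are immediate consequences of the two results just established, namely Corollary \ref{pressure_cor} and part (3) of the Ruelle-Perron-Frobenius Theorem \ref{thm_RPF}. First I would invoke Corollary \ref{pressure_cor} to rewrite the pressure as $P(-t\log|T'|) = \log\lambda_t$, thereby reducing every claim about the pressure function to a corresponding claim about the leading eigenvalue $\lambda_t$ of the transfer operator $\mathcal{L}_t$.

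For analyticity, Theorem \ref{thm_RPF}(3) already asserts that the map $t \mapsto \lambda_t$ is real analytic, while part (1) of the same theorem guarantees $\lambda_t > 0$. Since $\log$ is real analytic on the positive half-line, the composition $t \mapsto \log\lambda_t$ is real analytic, which is precisely the analyticity of $t \mapsto P(-t\log|T'|)$.

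For strict monotonicity, I would differentiate, obtaining $\frac{d}{dt}\log\lambda_t = \lambda'(t)/\lambda_t$. By Theorem \ref{thm_RPF}(3) we have $\lambda'(t) < 0$ for all real $t$, and by part (1) $\lambda_t > 0$, so this derivative is strictly negative everywhere; hence $t \mapsto P(-t\log|T'|)$ is strictly decreasing. There is no genuine obstacle here, since the analytic content has already been absorbed into Corollary \ref{pressure_cor} and into the spectral analysis of $\mathcal{L}_t$ carried out in the Ruelle-Perron-Frobenius theorem. The only point requiring even a moment's attention is confirming the positivity of $\lambda_t$, so that $\log\lambda_t$ is defined and the logarithm itself analytic at every $t$; but this is furnished directly by Theorem \ref{thm_RPF}(1).
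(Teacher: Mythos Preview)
Your proposal is correct and follows exactly the approach of the paper: the paper's proof simply says that the corollary is an immediate consequence of Theorem~\ref{thm_RPF}(3) and Corollary~\ref{pressure_cor}, and you have merely unpacked this one-line justification in full detail.
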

\begin{proof}
	It is an immediate consequence of Theorem \ref{thm_RPF} (3) and Corollary \ref{pressure_cor}.
\end{proof}

\begin{rmk} \label{lambda_one}
	$\lambda_s = 1$ when $s = $ Hausdorff dimension of $\Lambda$ and $\lambda_1 \le 1$ if and only if $s \le 1$.
\end{rmk}

\begin{cor}[Analyticity of Hausdorff dimension]\label{cor_ana_hdim}
	Let $T_{\lambda}$ be an analytic family of $C^{1+\alpha}$ locally expanding maps. Then the Hausdorff dimension as a function $\lambda \mapsto H.dim(\Lambda_{\lambda})$ is real analytic.
\end{cor}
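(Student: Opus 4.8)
The plan is to exhibit the Hausdorff dimension $s(\lambda)\defeq H.dim(\Lambda_\lambda)$ as the solution of an implicit equation to which the analytic implicit function theorem applies. By Bowen's formula (Theorem \ref{Bowen_Ruelle}) together with Corollary \ref{pressure_cor}, $s(\lambda)$ is the unique exponent $t$ with $P(-t\log|T_\lambda'|)=\log\lambda_t(\lambda)=0$, where $\lambda_t(\lambda)$ is the leading eigenvalue of the transfer operator $\mathcal{L}_{\lambda,t}$ associated to $T_\lambda$ and the potential $-t\log|T_\lambda'|$. Hence it suffices to show that the two-variable pressure $P(\lambda,t)\defeq\log\lambda_t(\lambda)$ is jointly real analytic in $(\lambda,t)$ and that $\partial_t P\neq 0$ along its zero locus; the analytic implicit function theorem then delivers real analyticity of $\lambda\mapsto s(\lambda)$.

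First I would fix the symbolic model. For an analytic family of uniformly expanding $C^{1+\alpha}$ Markov maps the combinatorial type of the Markov partition is locally constant, so the subshift of finite type $(\Sigma,\sigma)$ and the transition matrix $A$ are independent of $\lambda$, and every $\mathcal{L}_{\lambda,t}$ acts on one fixed Banach space $C^{\alpha}(\Sigma)$. Because the inverse branches $f_j$ of $T_\lambda$ and their derivatives $|f_j'|$ depend analytically on $\lambda$, the potential $-t\log|T_\lambda'|$ depends analytically on $(\lambda,t)$, and therefore $(\lambda,t)\mapsto\mathcal{L}_{\lambda,t}$, given by $(\mathcal{L}_{\lambda,t}\psi)(z)=\sum_{j}|f_j'(z)|^t\,\psi(f_j(z))$, is an analytic family of bounded operators on $C^{\alpha}(\Sigma)$.

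Next comes the key analytic input. By the Ruelle-Perron-Frobenius theorem (Theorem \ref{thm_RPF}) the leading eigenvalue $\lambda_t(\lambda)$ is simple, and the exponential convergence asserted there exhibits a spectral gap isolating it from the remainder of the spectrum. A simple isolated eigenvalue of an analytic family of bounded operators depends analytically on the parameters, by Kato's analytic perturbation theory: choosing a small contour separating $\lambda_t(\lambda)$ from the rest of the spectrum, the associated spectral projection and the eigenvalue itself are recovered as contour integrals of the resolvent $(zI-\mathcal{L}_{\lambda,t})^{-1}$, which vary analytically with $(\lambda,t)$; passing to a complexification of $\lambda$ if necessary, Kato's holomorphic theory applies and restricts to the real locus. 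Since $\lambda_t(\lambda)>0$, taking logarithms shows that $P(\lambda,t)$ is jointly real analytic.

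Finally I would invoke the implicit function theorem. By Theorem \ref{thm_RPF} (3) we have $\lambda'(t)<0$, so $\partial_t P=\lambda_t'/\lambda_t<0$ everywhere (this monotonicity is recorded in Corollary \ref{analyticity_eigenvalue}); in particular $\partial_t P\neq 0$ at $(\lambda,s(\lambda))$. The analytic implicit function theorem applied to $P(\lambda,s(\lambda))=0$ then yields that $\lambda\mapsto s(\lambda)$ is real analytic. I expect the main obstacle to lie in the eigenvalue step: one must check both that the transfer operators form a genuine analytic family on the fixed Hölder space (which rests on the Markov combinatorics being constant across the family and on analytic dependence of the branches) and that the spectral gap supplied by Theorem \ref{thm_RPF} at a single parameter persists on a neighborhood, so that one isolating contour works uniformly. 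The latter follows from stability of a simple isolated eigenvalue under operator-norm perturbation, but making these two points precise is the real content; the implicit function step itself is routine once $\partial_t P<0$ is known.
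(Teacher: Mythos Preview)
Your proposal is correct and follows essentially the same approach as the paper: express the Hausdorff dimension as the zero of the pressure $P(\lambda,t)$, observe that $\partial_t P<0$, and apply the (implicit) function theorem. The paper's proof is a two-line sketch citing Corollary~\ref{analyticity_eigenvalue} and the ``inverse function theorem,'' whereas you supply the missing justification for joint analyticity in $(\lambda,t)$ via a fixed symbolic model and Kato's perturbation theory for the simple leading eigenvalue; this extra care is warranted but does not constitute a different strategy.
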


\begin{proof}
	By Corollary \ref{analyticity_eigenvalue}, $f(t, \lambda) = P(-t\log|T_{\lambda}|)$ is analytic in $t$. Also, $\dfrac{\partial f(t, \lambda)}{\partial \lambda} \neq 0$. The corollary then follows from the inverse function theorem. 
\end{proof}

\subsection{Application I: Limit set of Schottky groups}
Recall that a Schottky group $\Gamma$ on $n$ generators is the image $\rho(F_n)$ under a marked Schottky representation $\rho: F_n \to \text{PSL}(2,\bbC)$. By definition, there are $n$ pairs of disjoint Jordan curves $C_i$, $C_i'$ on the boundary sphere $\hat{\bbC}$ such that $\rho(g_i)$ takes the exterior of $C_i'$ into the interior of $C_i$ and $\rho(g_i^{-1})$ takes the exterior of $C_i$ into the interior of $C_i'$. Then the Schottky group action is a conformal IFS on $\hat{\bbC}$ whose limit set $\Lambda_{\Gamma}$ is contained in the interior of these $2n$ Jordan curves.

The dynamics of $\Gamma$ has a natural symbolic coding. Set 
$$P_i \defeq \text{closed disk bounded by }C_i, \text{ and}$$ 
$$P_{2n+1-i} \defeq \text{closed disk bounded by }C_i'$$ for each $i = 1,\cdots,n$. Define $T: \Lambda_{\Gamma} \to \Lambda_{\Gamma}$ by 
$$T(z) = \begin{cases} \rho(g_i^{-1})(z) &\mbox{if } z \in P_i \\ 
\rho(g_i)(z) & \mbox{if } z \in P_{2n+1-i} \end{cases} \text{~for~} i =1,\cdots, n.$$ 
Then $T$ is a Markov map with respect to the Markov partition $\mathcal{P} = \{P_i\}_{i=1}^{2n}$. Furthermore, $T$ is expanding on $\Lambda_{\Gamma}$ (see \cite{Bowen} for details).

By definition,
\[ A = \left[\begin{array}{cccc}
1 & 1 & 0 & 1 \\
1 & 1 & 1 & 0 \\
0 & 1 & 1 & 1 \\
1 & 0 & 1 & 1 \\ \end{array} \right]\] 
and the subshift of finite type is the set $$\Sigma = \big\{ \underline i = (i_0, i_1, i_2,\cdots) ~|~ i_j \in \{g_1,\cdots, g_n, g_1^{-1},\cdots, g_n^{-1}\}, A_{i_j,i_k}=1\},$$ which is exactly the set of reduced words in the alphabet $\{g_1^{\pm 1},\cdots, g_n^{\pm 1}\}$.

\subsection{Application II: Julia set of quadratic polynomials}
For $c$ lying outside of the Mandelbrot set, the quadratic polynomial $f_c(z) = z^2 + c$ is expanding on its Cantor Julia set $J_c$. In fact, there exists a Markov partition $\{P_1, P_2\}$ with respect to which $f_c$ is a Markov map and satisfies $P_1 \subset f_c(P_1)$, $P_2 \subset f_c(P_1)$, $P_1 \subset f_c(P_2)$, $P_2 \subset f_c(P_2)$. 

Then, the matrix 
\[ A = \left[\begin{array}{cc}
1 & 1 \\
1 & 1 \\ \end{array} \right]\] 
defines the subshift of finite type $$\Sigma = \big\{ \underline i = (i_0, i_1, i_2,\cdots) ~|~ i_j \in  \{1,2\} \text{ for all } j \in \bbN \big\}.$$

With these symbolic codings, the mechanism of Thermodynamic Formalism applies to Schottky groups and quadratic polynomials, which will be used in the next section to prove the convergence theorems.

\section{Proof of the Convergence Theorems}\label{proof_conv_thm}

\subsection{Quadratic polynomials: proof of Theorem \ref{thm_Julia_conv}}
The proof consists of the following two lemmas.
\begin{lem} If the Hausdorff dimension of the Julia set $J_c$ with $c \notin \mathcal{M}$ is less than one, then the series \ref{Julia_id} converges absolutely.
\end{lem}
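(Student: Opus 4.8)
The plan is to control the absolute series $\sum_{w} |w(T_1(-z_1)) - w(T_2(-z_1))|$ by comparing its terms to the diameters of the cylinder sets $P_{\underline i}$ appearing in the symbolic coding of section 5, and then to invoke the Thermodynamic Formalism to convert the hypothesis $\dim_{\text{H}} J_c < 1$ into the summability of those diameters. First I would observe that each word $w \in \{T_1,T_2\}^*$ of length $n$ is a composition $w = f_{i_n} \circ \cdots \circ f_{i_1}$ of inverse branches, so the two points $T_1(-z_1)$ and $T_2(-z_1)$ both lie in a fixed compact set (the interval $U$, or a bounded region for complex $c$), and the quantity $w(T_1(-z_1)) - w(T_2(-z_1))$ is the image of a fixed pair of points under $w$. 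Since $w$ is conformal (holomorphic) and contracting, its variation across a set of bounded diameter is governed by $|w'|$ on that set; more precisely $|w(T_1(-z_1)) - w(T_2(-z_1))| \asymp |w'(\zeta)|$ for some $\zeta$ in the relevant region, up to the bounded-distortion constants of Proposition~\ref{estimate}(1).

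Next I would tie these derivative factors to the diameters $|P_{\underline i}|$. By Proposition~\ref{estimate}(2) we have $|P_{\underline i}| \asymp |T_{\underline i}'(z)|$ uniformly in $\underline i$ and $z$, where $T_{\underline i} = f_{i_n}\circ\cdots\circ f_{i_1}$ is exactly a length-$n$ word in the inverse branches. Therefore, summing the absolute value of the series over all words of length $n$ and then over $n$,
\begin{equation}
\sum_{w} \left| w(T_1(-z_1)) - w(T_2(-z_1)) \right| \asymp \sum_{n=0}^{\infty} \sum_{|\underline i| = n} |T_{\underline i}'(z)| \asymp \sum_{n=0}^{\infty} \sum_{|\underline i| = n} |P_{\underline i}|,
\end{equation}
so absolute convergence of the Basmajian-type series is equivalent to the summability of the diameters $|P_{\underline i}|$ over all finite words. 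The point of this reduction is that the right-hand double sum is precisely $\sum_n (\mathcal{L}_1^n \mathds{1})(z)$ up to the bounded-distortion constants, i.e.\ it is the $t=1$ specialization of the transfer-operator machinery.

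To finish, I would apply the Ruelle-Perron-Frobenius theorem (Theorem~\ref{thm_RPF}) together with Corollary~\ref{pressure_cor} and Remark~\ref{lambda_one}. By Theorem~\ref{thm_RPF}(1), $(\mathcal{L}_1^n \mathds{1})(z) \asymp \lambda_1^n$ where $\lambda_1$ is the maximal eigenvalue of $\mathcal{L}_1$, so the double sum behaves like the geometric series $\sum_n \lambda_1^n$, which converges exactly when $\lambda_1 < 1$. By Remark~\ref{lambda_one}, $\lambda_1 < 1$ holds precisely when the Hausdorff dimension of $J_c$ is strictly less than one (since $\lambda_s = 1$ at $s = \dim_{\text{H}} J_c$ and $s \mapsto \lambda_s$ is strictly decreasing). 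Thus the hypothesis $\dim_{\text{H}} J_c < 1$ yields $\lambda_1 < 1$ and hence absolute convergence. The main obstacle I anticipate is the first step: justifying the comparison $|w(T_1(-z_1)) - w(T_2(-z_1))| \asymp |w'(\zeta)|$ with constants uniform over all words $w$. This requires the bounded-distortion estimate of Proposition~\ref{estimate} applied to the conformal branches, and care that the two reference points $T_1(-z_1)$ and $T_2(-z_1)$ stay in the domain where that estimate holds; once uniform distortion is in hand, the remaining reductions to the transfer operator and the RPF theorem are routine.
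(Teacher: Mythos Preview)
Your proposal is correct and follows essentially the same route as the paper: reduce the absolute series to $\sum_n \sum_{|\underline i|=n}|P_{\underline i}|$, compare with $\sum_n \lambda_1^n$ via Ruelle--Perron--Frobenius, and invoke Remark~\ref{lambda_one} to translate $\dim_{\text{H}} J_c<1$ into $\lambda_1<1$. The only difference is that the paper bypasses your bounded-distortion step (the obstacle you flag) with the simpler containment $w(I)\subset P_{\underline i}$, which immediately gives the one-sided bound $|w(T_1(-z_1))-w(T_2(-z_1))|\le |P_{\underline i}|$ needed for this direction; your two-sided $\asymp$ estimate is stronger than required here but is exactly what the paper uses in the converse lemma.
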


\begin{proof}
	Since the Hausdorff dimension of $J_c <1$, by Remark \ref{lambda_one}, $\lambda_1 < 1$. If a word $w$ is identified with $\underline i$ by the symbolic coding, then image $w(I)$ of the largest gap $I$ under $w$ is contained in $P_{\underline i}$. Thus, summing up all the words up to length $N$, we have
	$$\sum_{|w| = 1}^N |w(I)| \le  \sum_{|\underline i|= 1}^N |P_{\underline i}| \le C \displaystyle\sum_{n=1}^N \lambda_1^{n}, $$ for some constant $C$, since $\sum_{|\underline i|= n}|P_{\underline i}| \asymp \lambda_1^{n}$. 
	
	\noindent Letting $N \to \infty$ gives the desired result.
\end{proof}

\begin{lem} \label{lem_Julia_2}
	There exists a constant $C$ such that for all $n \ge 1$, we have
	$$\displaystyle\sum_{|\underline{i}|=n}|P_{\underline{i}}|-\sum_{|\underline{i}|=n+1}|P_{\underline{i}}| \le  C\displaystyle\sum_{|w|=n}|w(I)|.$$ 
	In particular, if the series is absolutely convergent, then the Hausdorff dimension of the Julia set is less than one.
\end{lem}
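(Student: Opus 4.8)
The plan is to reduce everything to a single uniform comparison between the diameter of a level-$n$ cylinder and the diameter of the corresponding gap, and then sum. Write $S_n = \sum_{|\underline i| = n}|P_{\underline i}|$ and $g_n = \sum_{|w|=n}|w(I)|$, where under the symbolic coding a length-$n$ cylinder $P_{\underline i}$ corresponds to the length-$n$ word $w = T_{\underline i}$. Since the subtracted sum $S_{n+1}$ is nonnegative, the stated inequality is implied by the stronger bound $S_n \le C g_n$, which is what I would actually prove.

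First I would establish the pointwise estimate $|P_{\underline i}| \le C\,|w(I)|$, uniformly in $\underline i$, via bounded distortion. Both sides are comparable to the conformal derivative $|T_{\underline i}'(z)|$ for $z \in \Lambda = J_c$: Proposition \ref{estimate}(2) gives $|P_{\underline i}| \asymp |T_{\underline i}'(z)|$ directly. For the gap, recall $w(I) = T_{\underline i}(I_1)$ with $I_1 = [T_2(-z_1), T_1(-z_1)]$ the fixed largest gap. The essential observation is that its endpoints $T_1(-z_1)$ and $T_2(-z_1)$ lie in $J_c$ (they are preimages of the repelling fixed point $z_1$), so the bounded distortion of the derivative in Proposition \ref{estimate}(1), applied along the segment joining two points of $\Lambda$, yields $|w(I)| = |T_{\underline i}(T_1(-z_1)) - T_{\underline i}(T_2(-z_1))| \asymp |T_{\underline i}'(z)|\cdot|I_1|$. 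As $|I_1|$ is a fixed positive constant, this gives $|w(I)| \asymp |T_{\underline i}'(z)| \asymp |P_{\underline i}|$. Summing the inequality $|P_{\underline i}| \le C|w(I)|$ over all $\underline i$ with $|\underline i| = n$ and using the coding bijection produces $S_n \le C g_n$, hence the lemma's inequality.

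For the ``in particular'' clause, assume the series converges absolutely, i.e. $\sum_n g_n < \infty$. Then $\sum_n S_n \le C \sum_n g_n < \infty$, so $S_n \to 0$. On the other hand, combining Proposition \ref{estimate} with the Ruelle--Perron--Frobenius theorem (Theorem \ref{thm_RPF}) gives $S_n \asymp (\mathcal{L}_1^n \mathds{1})(z) \asymp \lambda_1^n$. Therefore $\lambda_1^n \to 0$, so $\lambda_1 < 1$; by Remark \ref{lambda_one} together with the strict monotonicity of $t \mapsto \lambda_t$ (Theorem \ref{thm_RPF}(3)), this is equivalent to $\text{dim}_{\text{H}} J_c < 1$.

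The main obstacle is the uniform two-sided comparison $|w(I)| \asymp |P_{\underline i}|$. The distortion estimates of Proposition \ref{estimate} are stated for cylinders and for derivatives evaluated at points of $\Lambda$, so some care is needed to apply them to the image of the fixed interval $I_1$; this is exactly where the fact that the gap endpoints lie in $J_c$ is used, since it reduces the question to distortion control between two points of the limit set rather than along an arbitrary segment. I would also emphasize why the argument passes through $S_n \le C g_n$ rather than telescoping the stated inequality directly: the telescoped bound $S_1 - S_{N+1} \le C\sum_{n\le N} g_n$ does not by itself exclude $\lambda_1 > 1$ (in which case $S_N \to \infty$), whereas summability of the stronger per-level bound forces $S_n \to 0$ immediately.
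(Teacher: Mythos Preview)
Your argument is correct. The core estimate $|P_{\underline i}| \asymp |w(I)|$ via bounded distortion (Proposition~\ref{estimate}) is exactly what the paper uses as well, so the substance is the same. The organization differs: the paper first decomposes each $P_{\underline i}$ into its two children $P_{\underline i}^1, P_{\underline i}^2$ and bounds $|P_{\underline i}| - |P_{\underline i}^1| - |P_{\underline i}^2| \le (1-2D_1)|P_{\underline i}|$, then invokes $|P_{\underline i}| \le D_3|w(I)|$ to obtain the inequality in the lemma literally as a difference $S_n - S_{n+1}$; it then concludes $\lambda_1 < 1$ from ``$\lambda_1^n(1-\lambda_1) < \infty$''. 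You instead go straight for the per-level bound $S_n \le C g_n$, note that the lemma's inequality follows trivially since $S_{n+1}\ge 0$, and deduce $\lambda_1<1$ from $\sum_n S_n < \infty$.

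Your route is a little more economical, and your closing remark is apt: the paper's telescoped conclusion ``$\lambda_1^n(1-\lambda_1) < \infty$ for all $n$, hence $\lambda_1 < 1$'' is not quite a complete argument on its face (for $\lambda_1 \ge 1$ this quantity is nonpositive and certainly finite), whereas your deduction via summability of $S_n$ closes the gap cleanly. The children-cylinder decomposition in the paper does have the advantage of giving the displayed inequality with a constant $(1-2D_1)D_3$ that one could in principle make explicit, but for the purposes of the ``in particular'' clause your argument is the more transparent one.
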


\begin{proof}
	Fix $n = |\underline{i}|$. Denote $P_{\underline i}^1$ and $P_{\underline i}^2$ the two disjoint sets contained in $P_{\underline i}$.
	Using estimates from Proposition \ref{estimate},
	$$\dfrac{C_1}{C_2}B_1^n|f_1'| \le \dfrac{|P_{\underline i}^1|}{|P_{\underline i}|} \le \dfrac{C_2}{C_1}B_2^n|f_1'|.$$ 
	Since $|f_1'|$ is bounded, there are constants $D_1$ and $D_2$ such that $$D_1 \le \dfrac{|P_{\underline i}^1|}{|P_{\underline i}|} \le D_2.$$
	
	The same holds for $P_{\underline i}^2$. Then,  
	$$|P_{\underline i}| - |P_{\underline i}^1|-|P_{\underline i}^2| \le |P_{\underline i}| (1-2D_1).$$

	On the other hand, $\dfrac{|P_{\underline i}|}{|w(I)|} \asymp \dfrac{|P_j|}{|f_j(I)|}$ for $j=1,2$. Then there are constants $D_3$ and $D_4$ such that for any word $w$ identified with $\underline i$ via the symbolic coding,
	$$D_4 \le \dfrac{|P_{\underline i}|}{|w(I)|} \le D_3$$
	
	Therefore, summing up all $\underline i$ and $w$ of length $n$, we have
	\begin{align*}
	\displaystyle\sum_{|\underline{i}|=n}|P_{\underline{i}}|-\sum_{|\underline{i}|=n+1}|P_{\underline{i}}| & \le  (1-2D_1)\displaystyle\sum_{|\underline{i}|=n}|P_{\underline i}| \\
	& \le (1-2D_1)D_3\displaystyle\sum_{|w|=n}|w(I)|.
	\end{align*}

	The right hand side of the inequality is finite whenever the series is absolute convergent.
	Thus $\lambda_1^n (1-\lambda_1) < \infty$. Since $n$ is arbitrary, $\lambda_1 < 1$.
\end{proof}

\subsection{Schottky groups: proof of Theorem \ref{thm_cxBas_conv}}

We begin with a special case which serves as a model for the general case.
\subsubsection{Torus with one boundary}
Let $\rho_0 : F_2 \to \text{PSL}(2,\bbC)$ be a Fuchsian marking whose underlying surface is a torus with a geodesic boundary $a_1$. $\pi_1M = F_2 = \langle a,b \rangle$ with an ordered symmetric generating set $S = \{a>b>A>B\}$, where capital letters denote inverses.

Normalize $\rho_0$ by conjugation so that $\rho([a,b])$ has $\infty$ and $0$ as attracting and repelling fixed point, respectively.

Then the series at this Fuchsian marking $\rho_0$ is 
$$ \sum_{w \in \mathscr{L}} \log [\infty,0 ;w\cdot \infty,w \cdot 0] =  \sum_{w \in \mathscr{L}} \log \dfrac{w(0)}{w(\infty)}$$ where $\mathscr{L}$ was specified in Example \ref{ex_torus}.

The following three lemmas constitute the proof of Theorem \ref{thm_cxBas_conv} for this special case.
\begin{lem}
	Take the principal branch of $\log$. Then the series $\displaystyle\sum_{w \in \mathscr{L}}\log \dfrac{w(0)}{w(\infty)}$ is absolutely convergent if and only if $\displaystyle\sum_{w \in \mathscr{L}} w(0) - w(\infty)$ is.
\end{lem}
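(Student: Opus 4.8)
The plan is to show that the terms $\log\frac{w(0)}{w(\infty)}$ and $w(0)-w(\infty)$ are asymptotically comparable, so that the two series of absolute values converge together. The key observation is that because each $w \in \mathscr{L}$ is a nontrivial reduced word, the points $w(0)$ and $w(\infty)$ both lie in the limit set $\Lambda_\Gamma$, which is a compact subset of $\bbC$ bounded away from $\infty$; moreover, as the word length grows, $w(0)$ and $w(\infty)$ are both trapped in the shrinking disk $P_{\underline i}$ attached to the initial segment of $w$ by the symbolic coding of Section 5.3. Hence both $w(0)$ and $w(\infty)$ converge to a common limit point, and in particular $w(0)/w(\infty) \to 1$ as $|w| \to \infty$.

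First I would make this precise: writing $u = w(0)/w(\infty)$, the quantity $\log u$ (principal branch) satisfies $\log u = (u-1) + O((u-1)^2)$ as $u \to 1$, so there are constants $0 < c_1 \le c_2$ with
$$c_1\,|u-1| \le |\log u| \le c_2\,|u-1|$$
for all but finitely many $w$. Thus $|\log\frac{w(0)}{w(\infty)}|$ is comparable to $\bigl|\frac{w(0)}{w(\infty)} - 1\bigr| = \frac{|w(0)-w(\infty)|}{|w(\infty)|}$. Since the fixed points and attracting/repelling data of $\rho_0(\alpha_j)$ lie in $\Lambda_\Gamma$ which is compact and, after the chosen normalization, disjoint from $0$, the denominator $|w(\infty)|$ is bounded above and below by positive constants uniformly in $w$. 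Therefore $|\log\frac{w(0)}{w(\infty)}| \asymp |w(0)-w(\infty)|$ term by term, and the two series of absolute values converge or diverge together.

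I expect the main obstacle to be justifying the uniform two-sided bound on $|w(\infty)|$, i.e.\ that $w(\infty)$ stays bounded away from both $0$ and $\infty$ for every nontrivial reduced word $w$. Bounded away from $\infty$ follows from $w(\infty) \in \Lambda_\Gamma$ and compactness of the limit set. Bounded away from $0$ requires a separate argument: the normalization places $0$ as the repelling fixed point of $\rho([a,b])$, and one must check that $w(\infty)$ cannot accumulate at $0$ except possibly along a controlled family; here I would invoke that only finitely many $w$ can have $w(\infty)$ within any fixed small neighborhood of $0$, using that the branches of the Markov map are contractions with uniformly bounded derivatives (Proposition \ref{estimate}), so the finitely many offending terms do not affect absolute convergence. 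The remaining steps — the logarithm estimate and term-by-term comparison — are routine once these geometric bounds are in hand.
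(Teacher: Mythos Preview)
Your approach is exactly the paper's: use the Taylor expansion $|\log(1+z)|\asymp|z|$ for small $|z|$ to obtain $\bigl|\log\tfrac{w(0)}{w(\infty)}\bigr|\asymp\tfrac{|w(0)-w(\infty)|}{|w(\infty)|}$ for long words, and then absorb the factor $|w(\infty)|$ by bounding it above and below. The paper compresses that last step into the single phrase ``as the limit set is compact,'' so your additional worry about the lower bound on $|w(\infty)|$ is care beyond what the paper itself supplies---though note your interim claim that $\Lambda_\Gamma$ is disjoint from $0$ is false (indeed $0\in\Lambda_\Gamma$ by the chosen normalization), which is precisely why you then, correctly, flag this as the real obstacle.
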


\begin{proof}
	$\log(1+z) = \sum_{n=1}^{\infty}\dfrac{(-1)^{n+1}}{n}z^n$ for $|z|<1$. Then, for $|z|$ small, $|\log(1+z)| \asymp |z|$. Therefore, for $|w|$ large, 
	$$\biggr|\log \dfrac{w(0)}{w(\infty)}\biggr| = \biggr\rvert \log \left(1+ \dfrac{w(0)-w(\infty)}{w(\infty)}\right)\biggr\rvert \asymp \biggr| \dfrac{w(0)-w(\infty)}{w(\infty)}\biggr|\asymp | w(0)-w(\infty)\rvert$$ as the limit set is compact.
\end{proof}

\begin{lem} \label{lem_torus_1}
	The series $$\displaystyle\sum_{w \in \mathscr{L}} w(0) - w(\infty)$$ is absolutely convergent if the Hausdorff dimension of the limit set is less than one.
\end{lem}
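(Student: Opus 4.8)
The plan is to bound the tail of the series $\sum_{w \in \mathscr{L}} \bigl(w(0) - w(\infty)\bigr)$ by the geometric quantity $\sum_{|\underline i| = n} |P_{\underline i}|$ controlled by the transfer operator, exactly as in the Julia set case (Lemma \ref*{lem_Julia_2}). The key observation is that $0$ and $\infty$ are the repelling and attracting fixed points of $\rho([a,b])$, both of which lie in the limit set $\Lambda_\Gamma$. Consequently, for a RedLex representative $w \in \mathscr{L}$ identified with a symbolic word $\underline i$ via the coding of Section 5.3, both image points $w(0)$ and $w(\infty)$ lie in the set $P_{\underline i}$ (the disk associated to the initial segment of the coding of $w$). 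Therefore $|w(0) - w(\infty)| \le |P_{\underline i}|$, the diameter of $P_{\underline i}$.

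First I would make precise the correspondence between a RedLex double coset representative $w \in \mathscr{L}$ and an admissible symbolic word $\underline i$ in the subshift of finite type $\Sigma$ from Section 5.3, noting that $\mathscr{L}$ is a regular sublanguage of the set of all reduced words. The point is that distinct elements of $\mathscr{L}$ give rise to distinct reduced words, and the length of $w$ as a word matches (up to a bounded additive constant coming from the fixed syllables of $[a,b]$) the symbolic length $|\underline i|$. Then, grouping the terms of the series by word length, I would write
\begin{equation}\label{plan_bound}
\sum_{\substack{w \in \mathscr{L} \\ |w| = n}} \bigl| w(0) - w(\infty) \bigr| \;\le\; \sum_{|\underline i| = n} |P_{\underline i}|.
\end{equation}

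Next I would invoke the analytic input already assembled. By Proposition \ref*{estimate}(2) one has $\sum_{|\underline i| = n} |P_{\underline i}| \asymp \sum_{|\underline i| = n} |T_{\underline i}'(z)|^t$ with $t = 1$, and this in turn equals $(\mathcal{L}_1^n \mathds{1})(z)$ by the computation preceding Theorem \ref*{thm_RPF}. The Ruelle--Perron--Frobenius theorem gives $(\mathcal{L}_1^n \mathds{1})(z) \asymp \lambda_1^n$. Since the Hausdorff dimension of $\Lambda_\Gamma$ is strictly less than one, Remark \ref*{lambda_one} yields $\lambda_1 < 1$. Summing \eqref{plan_bound} over all $n$ therefore gives a convergent geometric majorant $C \sum_{n=1}^\infty \lambda_1^n < \infty$, establishing absolute convergence.

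The main obstacle I anticipate is the first step: carefully controlling the relationship between the word-length stratification of the regular language $\mathscr{L}$ and the symbolic-length stratification of $\Sigma$, and in particular ensuring that the containment $w(0), w(\infty) \in P_{\underline i}$ holds for the \emph{correct} $\underline i$ of the right length. The RedLex representatives of double cosets $H w H$ are not literally the same as arbitrary reduced words, so one must verify that restricting the sum to $\mathscr{L}$ only discards terms (making the inequality easier) while the bound $|w(0)-w(\infty)| \le |P_{\underline i}|$ still applies with $\underline i$ determined by the appropriate initial segment of $w$. Once this bookkeeping is in place, the remainder is a direct application of the thermodynamic estimates, and the same geometric mechanism will extend verbatim to the general multi-boundary Schottky case by summing over the finitely many index pairs $(p,q)$.
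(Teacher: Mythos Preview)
Your proposal is correct and follows essentially the same approach as the paper: bound $|w(0)-w(\infty)|$ by the diameter $|P_{\underline i}|$ of the appropriate cylinder set, then sum and dominate by the geometric series $\sum \lambda_1^n$ via the Ruelle--Perron--Frobenius estimate and Remark~\ref*{lambda_one}. The only refinement the paper adds over your outline is to make the offset you flagged as the ``main obstacle'' explicit: for $|w|=n>3$ one takes $\underline i$ to be the first $n-3$ letters of $w$, so that $w(0),w(\infty)\in P_{\underline i}$ with $|\underline i|=n-3$; this is exactly the bounded additive shift you anticipated, and the rest of your argument goes through verbatim.
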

\begin{proof}
	Suppose the Hausdorff dimension is less than one. Note that for $|w|=n>3$, $w(0), w(\infty) \in P_{\underline{i}}$ where first $n-3$ letters of $w$ and $\underline{i}$ are identified, i.e. $|w(0)-w(\infty)| \le |P_{\underline{i}}|$.
	Hence, $$\sum_{|w| = 4}^{N} |w(0)-w(\infty)| \le \sum_{|\underline{i}|= 1}^N|P_{\underline{i}}| \le C\sum_{k=1}^{N}\lambda_1^k,$$ for some constant $C$ as $\sum_{|\underline i|= n}|P_{\underline i}| \asymp \lambda_1^{n}$. The conclusion follows by letting $N \to \infty$. 
\end{proof}

\begin{lem} \label{lem_torus_2}
	If the series $$\displaystyle\sum_{w \in \mathscr{L}} w(0) - w(\infty)$$ is absolutely convergent, then the Hausdorff dimension of the limit set is less than one.
\end{lem}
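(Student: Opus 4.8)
The plan is to mirror the structure of Lemma \ref{lem_Julia_2}, establishing the estimate that the level-$n$ gap sum dominates $\lambda_1^n$; once this is in place, absolute convergence forces $\lambda_1 < 1$, and hence (Remark \ref{lambda_one}) $\dim_{\mathrm H}\Lambda_{\Gamma} < 1$. Write $S_n \defeq \sum_{|\underline i| = n}|P_{\underline i}|$ for the sum of diameters of the level-$n$ Markov pieces. By Proposition \ref{estimate}(2) together with the Ruelle--Perron--Frobenius estimate $\mathcal{L}_1^n\mathds{1} \asymp \lambda_1^n$ of Theorem \ref{thm_RPF}(1), one has $S_n \asymp \lambda_1^n$, so it suffices to bound $\lambda_1^n$ above by a constant multiple of the gap sum at level $n$.

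First I would produce a lower bound for the shell difference $S_n - S_{n+1}$. Each piece $P_{\underline i}$ with $|\underline i| = n$ decomposes as the disjoint union of its children $P_{\underline i j}$ over admissible continuations $j$, together with a complementary region consisting of finitely many gaps. Because the defining Jordan curves $C_i, C_i'$ are disjoint, the children occupy a definite proportion strictly less than one of the parent: arguing as in Lemma \ref{lem_Julia_2} via the bounded-distortion estimates of Proposition \ref{estimate}(1)--(2), there are constants $0 < D_1 \le D_2$ with $D_1 \le |P_{\underline i j}|/|P_{\underline i}| \le D_2$ for each child, whence $\sum_j |P_{\underline i j}| \le \theta\,|P_{\underline i}|$ for a fixed $\theta < 1$ independent of $\underline i$. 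Summing over $|\underline i| = n$ gives $S_{n+1} \le \theta\,S_n$, and therefore $S_n - S_{n+1} \ge (1-\theta)\,S_n \ge c\,\lambda_1^n$ for some constant $c > 0$.

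Next I would match the shell difference to the gap sum over $\mathscr{L}$. The complementary region of $P_{\underline i}$ (the part not covered by its children) is a union of boundedly many gaps, each of which is the image under the group element coding $\underline i$ of a fixed base configuration; by the bijection between orthogeodesics and nontrivial RedLex double-coset representatives and the cross-ratio computation of equation \ref{coth}, each such gap is comparable to $|w(0) - w(\infty)|$ for a representative $w \in \mathscr{L}$ whose length differs from $n$ by at most a fixed additive constant (exactly the bounded shift already used in Lemma \ref{lem_torus_1}). Bounded distortion again makes each gap $\asymp |P_{\underline i}|$, so summing over $|\underline i| = n$ yields $S_n - S_{n+1} \le C \sum_{w} |w(0) - w(\infty)|$, the sum taken over the boundedly many length-classes of $\mathscr{L}$ meeting level $n$.

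Combining the two estimates gives $\lambda_1^n \le \tfrac{1}{c}(S_n - S_{n+1}) \le \tfrac{C}{c}\sum_{w}|w(0)-w(\infty)|$, where the right-hand side is a bounded block of the assumed absolutely convergent series and hence tends to $0$ as $n \to \infty$. Thus $\lambda_1^n \to 0$, forcing $\lambda_1 < 1$, and by Remark \ref{lambda_one} the Hausdorff dimension of $\Lambda_{\Gamma}$ is strictly less than one. I expect the main obstacle to be the bookkeeping in the third paragraph: one must verify that the finitely many geometric gaps inside each level-$n$ piece are genuinely enumerated, with only a bounded length distortion and uniformly bounded multiplicity, by the RedLex double-coset language $\mathscr{L}$, so that the shell difference is comparable to (and not merely bounded above by) the corresponding block of the Basmajian series.
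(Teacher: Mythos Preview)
Your overall architecture matches the paper's, but there is a genuine gap in your second paragraph that invalidates the argument as written.

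You claim that from the bounded-distortion inequalities $D_1 \le |P_{\underline i j}|/|P_{\underline i}| \le D_2$ one obtains $\sum_j |P_{\underline i j}| \le \theta\,|P_{\underline i}|$ for some fixed $\theta < 1$. This does not follow. Bounded distortion gives only $\sum_j |P_{\underline i j}| \le 3D_2\,|P_{\underline i}|$ (there being three children in the torus case), and nothing forces $3D_2 < 1$: the constant $D_2$ depends on the contraction rates of the generators, and for Schottky groups whose limit set has Hausdorff dimension $\ge 1$ --- which certainly exist --- one has $\lambda_1 \ge 1$, hence $S_{n+1} \ge S_n$ and so $\sum_j |P_{\underline i j}| \ge |P_{\underline i}|$ on average. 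In the complex setting the children are disjoint topological disks inside the parent, and disjointness of sets in $\bbC$ places no constraint on the sum of their \emph{diameters}. Worse, your claim $S_{n+1} \le \theta S_n$ with $\theta<1$ already yields $S_n \le \theta^n S_0 \to 0$ and hence $\lambda_1 < 1$ without ever invoking the hypothesis that the Basmajian series converges --- so the step is both unjustified and circular.

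The paper avoids this by running only one direction of your sandwich. It proves the upper bound $|P_{\underline i}| - \sum_j |P_{\underline i}^j| \le C\,|w(0)-w(\infty)|$ piecewise (this is your third paragraph, and the step you correctly identify as requiring care: one must track that the RedLex restrictions on $\mathscr{L}$ exclude only an $\tfrac{1}{9}$-fraction of the level-$n$ pieces). Summing gives $\tfrac{8}{9}S_n - S_{n+1} \le C\sum_{|w|=n}|w(0)-w(\infty)|$, and then one reads off $\lambda_1 < 1$ from $S_n \asymp \lambda_1^n$. What really makes this work is that bounded distortion already gives the two-sided comparison $|w(0)-w(\infty)| \asymp |P_{\underline i}|$ for the matched piece $\underline i$, so $\sum_{|w|=n}|w(0)-w(\infty)| \asymp S_n \asymp \lambda_1^n$ directly; absolute convergence then forces the level-$n$ block to tend to zero, hence $\lambda_1 < 1$. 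You should replace your second paragraph with this direct comparison and drop the shell-difference lower bound entirely.
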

\begin{proof}
	For each $n$ large, if $w$ of length $n$ is identified with $\underline{i}$, denote $P_{\underline i}^1$, $P_{\underline i}^2$ and $P_{\underline i}^3$ the three disjoint sets contained in $P_{\underline i}$. Then, exactly the same argument as in the proof of Lemma \ref{lem_Julia_2} shows that there exists a constant $C$ such that for all $\underline i$,  
	$$|P_{\underline i}| - |P_{\underline i}^1|-|P_{\underline i}^2|-|P_{\underline i}^3| \le C|w(0)-w(\infty)|.$$
	Summing all the words $w$ of length $n$,
	$$\sum_{\underline{i} \sim w}|P_{\underline i}| - \sum_{|\underline{i}|=n+1}|P_{\underline i}| \le C\sum_{|w|=n}|w(0)-w(\infty)|.$$
	
	Recall that $w$ does not start with $abA$ or $ba$, for $|\underline{i}| = m$, $4\cdot3^{m-3}$ of $4 \cdot 3^{m-1}$ $P_{\underline{i}}$'s are not visited by $w(0)$ or $w(\infty)$. Hence, $$\sum_{\underline{i} \sim w}|P_{\underline i}| = \dfrac{8}{9} \sum_{|\underline{i}|=n}|P_{\underline i}|=\dfrac{8}{9}\lambda_1^n.$$
	
	For $n$ large, we have
	$$\infty > \displaystyle\sum_{|w|=n}|w(0)-w(\infty)| \ge \dfrac{8}{9}\displaystyle\sum_{|\underline{i}|=n}|P_{\underline{i}}|-\sum_{|\underline{i}|=n+1}|P_{\underline{i}}| = \lambda_1^n (\frac{8}{9}-\lambda_1).$$
	Hence, $\lambda_1 < 1$. 
\end{proof}

\subsubsection{The general case}

\begin{lem}
	The series 
	\begin{equation}\label{series_Kleinian}
	\sum_{p,q =1}^k \sum_{w \in \mathscr{L}_{p,q}}\log [\alpha_p^+,\alpha_p^-;w\cdot \alpha_q^+,w \cdot \alpha_q^-]
	\end{equation} is absolutely convergent if the Hausdorff dimension of the limit set is less than one.
\end{lem}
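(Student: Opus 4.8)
The plan is to reduce the general case to the torus computation already carried out in Lemmas \ref{lem_torus_1} and \ref{lem_torus_2}, exploiting the fact that the double index $(p,q)$ ranges over only finitely many values. First I would linearize the logarithm of the cross ratio. Writing $A = \alpha_p^+$, $B = \alpha_p^-$, $X = w\cdot\alpha_q^+$ and $Y = w\cdot\alpha_q^-$, a direct expansion of the defining product gives
$$[A,B;X,Y] - 1 = \frac{(A-B)(X-Y)}{(A-Y)(B-X)}.$$
As $|w|\to\infty$ the two image points $X,Y$ converge into the limit set and stay uniformly bounded away from the fixed points $A,B$ of $\rho(\alpha_p)$; since $\Lambda_{\Gamma}$ is compact and $A\ne B$, the factor $(A-B)/[(A-Y)(B-X)]$ is bounded above and below. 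Hence the cross ratio tends to $1$ and, exactly as in the first torus lemma, $\bigl|\log[A,B;X,Y]\bigr| \asymp |X-Y| = |w\cdot\alpha_q^+ - w\cdot\alpha_q^-|$ for $|w|$ large. Thus the series \ref{series_Kleinian} is absolutely convergent if and only if $\sum_{p,q}\sum_{w\in\mathscr{L}_{p,q}} |w\cdot\alpha_q^+ - w\cdot\alpha_q^-|$ is, and it suffices to prove that the latter converges when $\text{dim}_{\text{H}}\Lambda_{\Gamma}<1$.

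For the second step I would pass to the symbolic coding of the Schottky dynamics described in Application I. The key geometric observation is that $\alpha_q^+$ and $\alpha_q^-$ are the two fixed points of the loxodromic element $\rho(\alpha_q)$, so they lie in $\Lambda_{\Gamma}$ and their images $w\cdot\alpha_q^+,\,w\cdot\alpha_q^-$ share a common symbolic prefix obtained by reducing the word $w$. After discarding at most a bounded number $c = c(\alpha_1,\dots,\alpha_k)$ of trailing letters, to account for cancellation of $w$ against the periodic codings of $\alpha_q^{\pm}$, I obtain a cylinder $P_{\underline i}$ with $|\underline i| \ge |w| - c$ containing both points, whence $|w\cdot\alpha_q^+ - w\cdot\alpha_q^-| \le |P_{\underline i}|$. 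Summing over the words of a fixed length $n$ and using that $w\mapsto\underline i$ is boundedly-many-to-one, I would bound $\sum_{|w|=n}|w\cdot\alpha_q^+ - w\cdot\alpha_q^-|$ by a constant multiple of $\sum_{|\underline i|\ge n-c}|P_{\underline i}| \asymp \sum_{m\ge n-c}\lambda_1^m$, using $\sum_{|\underline i|=m}|P_{\underline i}| \asymp \lambda_1^m$ from Proposition \ref{estimate} and Theorem \ref{thm_RPF}. Since $\text{dim}_{\text{H}}\Lambda_{\Gamma}<1$ forces $\lambda_1<1$ by Remark \ref{lambda_one}, this is a convergent geometric series; summing over the finitely many pairs $(p,q)$ preserves convergence.

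The main obstacle I anticipate is the bookkeeping in the second step: translating between the RedLex double-coset representatives $w\in\mathscr{L}_{p,q}$ and the reduced-word symbolic cylinders $P_{\underline i}$, and in particular controlling both the cancellation between the suffix of $w$ and the generator $\alpha_q$ (so that the common prefix really has length at least $|w|-c$) and the multiplicity of the map $w\mapsto\underline i$ (so that cylinders are not overcounted by more than a bounded factor when passing from a sum over words to a sum over the $P_{\underline i}$). Both are finite-combinatorial issues governed by the finite-state automaton for $\mathscr{L}_{p,q}$ --- essentially the same phenomenon that produced the excluded prefixes $abA,\,ba$ and the constant $\frac{8}{9}$ in the torus case --- and once that automaton is in hand they reduce to verifying that only a controlled fraction of cylinders is visited, each at most boundedly often. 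Everything else parallels Lemmas \ref{lem_torus_1} and \ref{lem_torus_2}.
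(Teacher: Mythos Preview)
Your proposal is correct and follows essentially the same approach as the paper: the paper's proof simply states that for each fixed pair $(p,q)$ the inner series reduces to the torus case (with possibly different starting and ending conditions on the RedLex representatives) and then appeals to Lemma~\ref{lem_torus_1}. Your write-up is more detailed than the paper's --- you carry out the cross-ratio linearization for a general $p$ without first normalizing $\alpha_p^{\pm}$ to $0,\infty$, and you flag the combinatorial bookkeeping about cancellation and the multiplicity of $w\mapsto\underline i$ explicitly --- but the underlying argument is the same.
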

\begin{proof}
	For each $p,q = 1,\cdots, k$, the absolute convergence of the series $$\displaystyle\sum_{w \in \mathscr{L}_{p,q}}\log [\alpha_p^+,\alpha_p^-;w\cdot \alpha_q^+,w \cdot \alpha_q^-]$$ is reduced to the torus case (possibly with different starting and ending conditions for RedLex $w$) and follows from Lemma \ref{lem_torus_1}.
\end{proof}

\begin{lem}
	If the series \ref{series_Kleinian} is absolutely convergent, then the Hausdorff dimension of the limit set is less than one.
\end{lem}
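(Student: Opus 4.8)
The plan is to prove this converse direction by reversing the comparison used in the convergence direction, reducing the general multi-boundary estimate to the single-boundary (torus) computation of Lemma~\ref{lem_torus_2} and then pooling the finitely many double-coset languages $\mathscr{L}_{p,q}$. First I would pass from the cross-ratio series in \ref{series_Kleinian} to the difference series $\sum_{p,q}\sum_{w\in\mathscr{L}_{p,q}}\lvert w\cdot\alpha_q^+-w\cdot\alpha_q^-\rvert$: exactly as in the log-comparison lemma for the torus case, once $\lvert w\rvert$ is large the two images $w\cdot\alpha_q^{\pm}$ lie in a common small piece, so the cross ratio is close to $1$ and its logarithm is comparable to the difference $\lvert w\cdot\alpha_q^+-w\cdot\alpha_q^-\rvert$; absolute convergence of one series is therefore equivalent to that of the other.

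Next, fix a pair $(p,q)$ and a word $w\in\mathscr{L}_{p,q}$ of length $n$, and let $\underline i$ be the symbol sequence obtained from the length-$(n-O(1))$ common prefix of the codings of $w\cdot\alpha_q^{\pm}$, so that $w\cdot\alpha_q^{\pm}\in P_{\underline i}$. Since $\alpha_q^+,\alpha_q^-$ are distinct fixed points ranging over a finite set, $\lvert\alpha_q^+-\alpha_q^-\rvert$ is bounded away from $0$; applying the bounded-distortion estimates of Proposition~\ref{estimate} to the conformal map realizing $w$ then gives $\lvert w\cdot\alpha_q^+-w\cdot\alpha_q^-\rvert\asymp\lvert P_{\underline i}\rvert$, with constants uniform in $w$, $p$ and $q$. (For a boundary element normalized so that one fixed point is at $\infty$, this comparison is replaced by the gap estimate $\lvert P_{\underline i}\rvert-\sum_j\lvert P_{\underline i}^j\rvert\le C\lvert w\cdot\alpha_q^+-w\cdot\alpha_q^-\rvert$ of Lemma~\ref{lem_torus_2}, which serves equally well for the lower bound below.)

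The heart of the argument is the lower bound. Summing over all $p,q$ and all $w$ of length $n$, I would show that the images $w\cdot\alpha_q^{\pm}$ visit a definite positive fraction $\kappa$ of the level-$n$ pieces, uniformly in $n$: this is the multi-language analogue of the count (the fraction $\tfrac{8}{9}$ in the torus example) that the RedLex start/end conditions defining each $\mathscr{L}_{p,q}$ forbid only finitely many prefix and suffix patterns, hence exclude a proportion of pieces bounded strictly away from $1$. Combined with the distortion bound $B_1\le\lvert T_{\underline i}'(x)\rvert/\lvert T_{\underline i}'(y)\rvert\le B_2$ of Proposition~\ref{estimate}, the hit pieces carry a definite fraction of the diameter sum, so
\begin{equation*}
\sum_{p,q=1}^k\ \sum_{\substack{w\in\mathscr{L}_{p,q}\\ \lvert w\rvert=n}}\lvert w\cdot\alpha_q^+-w\cdot\alpha_q^-\rvert\ \gtrsim\ \kappa\sum_{\lvert\underline i\rvert=n}\lvert P_{\underline i}\rvert\ \asymp\ \lambda_1^{\,n},
\end{equation*}
where the last comparison is Proposition~\ref{estimate} together with the Ruelle--Perron--Frobenius Theorem~\ref{thm_RPF}. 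If the difference series converges absolutely, its length-$n$ blocks are summable in $n$, which forces $\sum_n\lambda_1^{\,n}<\infty$ and hence $\lambda_1<1$; by Remark~\ref{lambda_one} this is equivalent to $\text{dim}_{\text{H}}\Lambda_{\Gamma}<1$. Note that measuring each hit term against the full $\lvert P_{\underline i}\rvert$ (rather than the gap of $P_{\underline i}$) makes the block comparable to $\lambda_1^{\,n}$ outright, so the conclusion $\lambda_1<1$ follows directly from summability without any delicate handling of the $\lambda_1=1$ borderline.

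I expect the combinatorial lower bound --- that the pooled families $\{\mathscr{L}_{p,q}\}$ still hit a fraction of pieces bounded away from $1$ uniformly in the level $n$ --- to be the main obstacle, since, unlike the torus case, there is no single explicit automaton to count against and one must argue uniformly across the finitely many double-coset languages at once. A secondary technical point is controlling the bounded number of terminal letters of $w$ in which $\alpha_q$ is encoded, so that the distortion comparison $\lvert w\cdot\alpha_q^+-w\cdot\alpha_q^-\rvert\asymp\lvert P_{\underline i}\rvert$ holds with constants independent of the finitely many choices of $q$.
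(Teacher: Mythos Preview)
Your plan is correct but does more work than the paper. The paper's proof is a single reduction: absolute convergence of the full sum (\ref{series_Kleinian}) implies in particular absolute convergence of the sub-series with $p=q=1$ alone, and to that one sub-series one applies the argument of Lemma~\ref{lem_torus_2} verbatim (with the fraction $\tfrac{8}{9}$ replaced by whatever positive constant the RedLex prefix/suffix conditions on $\mathscr{L}_{1,1}$ yield for the surface at hand). No pooling of the languages $\mathscr{L}_{p,q}$ is required, so the ``main obstacle'' you anticipate --- a uniform combinatorial lower bound across all the double-coset languages simultaneously --- simply evaporates: one language already suffices, since its forbidden patterns are bounded in length. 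On the other hand, your route buys something: by comparing each hit term directly to the full diameter $\lvert P_{\underline i}\rvert$ rather than to the gap $\lvert P_{\underline i}\rvert-\sum_j\lvert P_{\underline i}^j\rvert$ as in Lemma~\ref{lem_torus_2}, you obtain an $n$-th block $\gtrsim\kappa\,\lambda_1^{\,n}$ outright, so summability forces $\sum_n\lambda_1^{\,n}<\infty$ and hence $\lambda_1<1$ without any delicacy at the borderline $\lambda_1\ge\kappa$, where the paper's gap-based lower bound $\lambda_1^{\,n}(\kappa-\lambda_1)$ becomes vacuous.
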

\begin{proof}
	In particular, for $p=q=1$, the series $$\displaystyle\sum_{w \in \mathscr{L}_{1,1}}\log [\alpha_1^+,\alpha_1^-;w\cdot \alpha_1^+,w \cdot \alpha_1^-]$$ is absolutely convergent. Then apply the same argument as in the proof of Lemma \ref{lem_torus_2}.
\end{proof}


\begin{thebibliography}{8}
\bibitem{Bas} A. Basmajian, {\it The orthogonal spectrum of a hyperbolic manifold}, American Journal of Mathematics, {\bf 115} (1993), no. 5, 1139–1159


\bibitem{Bers}L. Bers, {\it Automorphic forms for Schottky groups}, Advances in Mathematics, {\bf 16} (1975), 332–361

\bibitem{Bowen} R. Bowen, {\it Hausdorff dimension of quasi-circles}, IHES Publ. Math. {\bf 50} (1979), 1–25

\bibitem{C-K-W} D. Calegari, S. Koch and A. Walker, {\it Roots, Schottky semigroups, and a proof of Bandt's Conjecture}, arXiv:1410.8542


\bibitem{Fal}
K. Falconer, \emph{Techniques in Fractal Geometry}. John Wiley, 1997

\bibitem{Hed} G. Hedlund, {\it Endomorphisms and automorphisms of the shift dynamical system}, Math. Syst. Theory {\bf 3} (1969)



\bibitem{Pol} M. Pollicott. {\it Lectures on fractals and dimension theory}, retrieved from http://homepages.warwick.ac.uk/~masdbl/dimension-total.pdf

\bibitem{Rue} D. Ruelle, {\it Thermodynamic formalism}, Addison-Wesley Publishing Co., Reading, Massachusetts, 1978





\end{thebibliography}
\end{document}